
\NeedsTeXFormat{LaTeX2e}

\documentclass[oneside,11pt,reqno]{amsart}

\usepackage[a4paper,margin=30mm]{geometry}

\usepackage{verbatim,upref,amsxtra,amssymb,amscd}

\DeclareMathAlphabet\mathscr{U}{eus}{m}{n}
\SetMathAlphabet\mathscr{bold}{U}{eus}{b}{n}
\DeclareMathAlphabet\matheur{U}{eur}{m}{n}
\SetMathAlphabet\matheur{bold}{U}{eur}{b}{n}

\numberwithin{equation}{section}

\newtheorem{theo}{Theorem}[section]
\newtheorem{prop}[theo]{Proposition}

\newtheorem{coro}[theo]{Corollary}

\theoremstyle{definition}

\newtheorem{defi}[theo]{Definition}
\newtheorem{exam}[theo]{Example}
\newtheorem{exas}[theo]{Examples}

\theoremstyle{remark}

\newtheorem{rema}[theo]{Remark}
\newtheorem{rems}[theo]{Remarks}

\begin{document}\allowdisplaybreaks\frenchspacing

\setlength{\baselineskip}{1.1\baselineskip}

\title{Algebraic Polymorphisms}

\author{Klaus Schmidt}

\address{Klaus Schmidt: Mathematics Institute, University of Vienna, Nordberg\-stra{\ss}e 15, A-1090 Vienna, Austria \newline\indent \textup{and} \newline\indent Erwin Schr\"odinger Institute for Mathematical Physics, Boltzmanngasse~9, A-1090 Vienna, Austria} \email{klaus.schmidt@univie.ac.at}

\author{Anatoly Vershik}

\address{Anatoly Vershik: Steklov Institute of Mathematics at St. Petersburg, 27 Fontanka, St.Petersburg 191011, Russia} \email{vershik@pdmi.ras.ru}

\thanks{The first author would like to thank the Department of Mathematics, Ohio State University, Columbus, Ohio, for hospitality while some of this work was done. The second author would like to thank the Erwin Schr\"{o}dinger Institute in Vienna for hospitality and support while some of this work was done, and gratefully acknowledges partial support by the grants RFBR 05-01-0089 and NSh-432920061}
\subjclass[2000]{37A05,37A45}
\keywords{Algebraic polymorphisms, toral automorphisms}

\dedicatory{Dedicated to the memory of our colleague and friend William Parry}


    \begin{abstract}
In this paper we consider a special class of polymorphisms with invariant measure, the algebraic polymorphisms of compact groups. A general polymorphism is --- by definition --- a many-valued map with invariant measure, and the conjugate operator of a polymorphism is a Markov operator (i.e., a positive operator on $L^2$ of norm 1 which preserves the constants). In the algebraic case a polymorphism is a correspondence in the sense of algebraic geometry, but here we investigate it from a dynamical point of view.
The most important examples are the algebraic polymorphisms of torus, where we introduce a parametrization of the semigroup of toral polymorphisms in terms of rational matrices and describe the spectra of the corresponding Markov operators.

A toral polymorphism is an automorphism of $\mathbb{T}^m$ if and only if the associated rational matrix lies in $\textup{GL}(m,\mathbb{Z})$. We characterize toral polymorphisms which are factors of toral automorphisms.
    \end{abstract}

\maketitle

\section{Algebraic polymorphisms}\label{s:poly}

    \begin{defi}
    \label{d:algebraic}
Let $G$ be a compact group with Borel field $\mathscr{B}_G$, normalized Haar measure $\lambda _G$ and identity element $1=1_G$. A closed subgroup $\mathsf{P}\subset G\times G$ is an (\emph{algebraic}) \emph{correspondence of $G$} if $\pi _1(\mathsf{P})=\pi _2(\mathsf{P})=G$, where $\pi _i\colon G\times G\longrightarrow G$, $i=1,2$, are the coordinate projections (which are obviously group homomorphisms).

Every correspondence $\mathsf{P}\subset G\times G$ defines a map $\Pi _\mathsf{P}$ from $G$ to the set of all nonempty closed subsets of $G$ by
    \begin{equation}
    \label{eq:poly}
\Pi _\mathsf{P}(x)=\{y:(x,y)\in \mathsf{P}\}
    \end{equation}
for every $x\in G$. Clearly, $\pi _i$ sends the Haar measure on $\mathsf{P}$ to Haar measure on $G$; in the terminology of \cite{Ve}, the correspondence $\mathsf{P}$ defines an (\emph{algebraic}) \emph{polymorphism} of $G$ (more exactly, $\mathsf{P}$ determines a \emph{polymorphism of the measure space $(G,\mathscr{B}_G,\lambda _G)$ to itself}).\footnote{In general, a measure-preserving polymorphism $\Pi $ of a probability space $(X,\mathscr{S},\mu )$ is determined by a probability measure $\nu $ on $X\times X$ with ${\pi _i}_*\nu =\mu $ for $i=1,2$, i.e., by a \textit{coupling} of $\mu $ with itself.}

The correspondence $\mathsf{P}\subset G\times G$ and the polymorphism $\Pi _\mathsf{P}$ obviously determine each other.

Algebraic polymorphisms from one compact group to another are defined similarly.

A correspondence $\mathsf{P}'\subset G'\times G'$ is a \textit{factor} of a correspondence $\mathsf{P}\subset G\times G$ (and the polymorphism $\Pi _{\mathsf{P}'}$ is a factor of $\Pi _\mathsf{P}$) are \textit{isomorphic} if there exists a surjective group homomorphism $\phi \colon G\longrightarrow G'$ with $(\phi \times \phi )(\mathsf{P})=\mathsf{P}'$. If $\phi $ can be chosen to be a group isomorphism then $\mathsf{P}$ and $\mathsf{P}'$ (resp. $\Pi _\mathsf{P}$ and $\Pi _{\mathsf{P}'}$) are \textit{isomorphic}.

This notion of factors is consistent with the terminology in \cite{Ve}: if $\Pi $ is a measure-preserving polymorphism of a probability space $(X,\mathscr{S},\mu )$ determined by a self-coupling $\nu $ of $\mu $, and if $\mathscr{T}\subset \mathscr{S}$ is a sub-sigma-algebra, then the factor polymorphism $\Pi _\mathscr{T}$ of $(X,\mathscr{T})$ is determined by the restriction of $\nu $ to the sigma-algebra $\mathscr{T}\otimes\mathscr{T}\subset \mathscr{S}\otimes\mathscr{S}$.
    \end{defi}

\medskip Let $\mathsf{P}\subset G\times G$ be a correspondence (since we only consider algebraic correspondences and polymorphisms we drop the term \emph{algebraic} from now on). The subgroup
    \begin{equation}
    \label{eq:inverse}
\mathsf{P}^*=\{(y,x):(x,y)\in \mathsf{P}\}
    \end{equation}
corresponds to the \emph{conjugate} (or \emph{inverse}) polymorphism of $\Pi _\mathsf{P}$. If $\mathsf{P}_1,\mathsf{P}_2$ are two correspondences of $G$, their \emph{product} $\mathsf{P}_1\star\mathsf{P}_2$ is the correspondence
    \begin{equation}
    \label{eq:product}
    \begin{aligned}
\mathsf{P}_1\star\mathsf{P}_2=\{(x,z)\in G\times G &:(x,y)\in \mathsf{P}_2\enspace \textup{and}\enspace (y,z)\in \mathsf{P}_1
    \\
&\enspace \textup{for at least one}\enspace y\in G\}.
    \end{aligned}
    \end{equation}
Clearly,
    $$
\Pi _{\mathsf{P}_1\star \mathsf{P}_2}(x)=\Pi _{\mathsf{P}_1}\circ \Pi _{\mathsf{P}_2}(x)=\bigcup_{y\in \Pi _{\mathsf{P}_2}(x)}\Pi _{\mathsf{P}_1}(y)
    $$
for every $x\in G$. With respect to the composition \eqref{eq:product} the set of all correspondences (or, equivalently, the set of all polymorphisms) of $G$ is a semigroup, denoted by $\mathcal{P}(G)$, with involution $\mathsf{P}\mapsto \mathsf{P}^*$, identity element $P_1=\{(g,g),g \in G\}$ and zero element $P_0=G\times G$.

For later use we introduce also the higher powers $\mathsf{P}^n$ of $\mathsf{P}$, $n\ge2$, defined recursively by
    \begin{equation}
    \label{eq:powers}
\mathsf{P}^n=\mathsf{P}^{n-1}\star \mathsf{P}.
    \end{equation}

If $\mathsf{P}\subset G\times G$ is a correspondence such that the group homomorphisms $\pi_i\colon \mathsf{P}\longrightarrow G, i=1,2$, are injections, then $\mathsf{P}$ is (the graph of) an automorphism of $G$, and the conjugate correspondence yields the inverse automorphism. If $\pi_2$ is an injection then $\mathsf{P}$ is (the graph of) an $\emph{endomorphism}$ (i.e., of a surjective group homomorphism), and if $\pi_1$ is an injection then $\mathsf{P}$ is (the graph of) an \emph{exomorphism} (i.e., $\mathsf{P}^*$ is the graph of an endomorphism). \textit{The group of automorphisms as well as semigroups of endo- and exomorphisms are sub-semigroups of the semigroup of $\mathscr{P}(G)$ of correspondences of $G$.}

We note in passing that the product of the algebraic polymorphisms is a special case of the general notion
of the product of measure-preserving polymorphisms in \cite{Ve}.

    \begin{defi}
    \label{d:poly}
For the following definitions we fix a correspondence $\mathsf{P}$ of a compact group $G$. We write $\mathscr{B}_\mathsf{P}$ and $\lambda _\mathsf{P}$ for the Borel field and the Haar measure of $\mathsf{P}$.

\smallskip (1) \textit{Algebraic factor polymorphisms}. Let $H\subset G$ be a closed subgroup, and let $\mathsf{P}_{\negthinspace H}=\mathsf{P}/(H\times H)\subset (G/H\times G/H)$ be the associated \textit{factor correspondence}. The subgroup $H\subset G$ is \textit{invariant, co-invariant} or \textit{doubly invariant} under the polymorphism $\Pi _\mathsf{P}$ if $\mathsf{P}_{\negthinspace H}$ is an endomorphism, exomorphism or an automorphism. Examples will be given in Section \ref{s:examples}.

\smallskip (2) \textit{The Markov operator}. Put $\mathscr{B}_\mathsf{P}^{(i)}=\pi _i^{-1}(\mathscr{B}_G)\subset \mathscr{B}_\mathsf{P}, i=1,2$, and let $F_i\subset L^2(G,\mathscr{B}_G,\lambda _G)$ be the subspace of functions measurable with respect to  $\mathscr{B}_\mathsf{P}^{(i)},\,i=1,2$. Let $\mathrm{Pr}_i$ be the orthogonal projection in $L^2(G,\mathscr{B}_G,\lambda _G)$ onto $F_i, i=1,2$. We define the Markov operator
    $$
V_\mathsf{P}\colon L^2(G,\mathscr{B}_G,\lambda _G)\longrightarrow L^2(G,\mathscr{B}_G,\lambda _G)
    $$
as follows: if $f\in L^2(G,\mathscr{B}_G,\lambda _G)$, we define $h\in L^2(\mathsf{P},\mathscr{B}_\mathsf{P},\lambda _\mathsf{P})$ by
    $$
h(x,y)=f(x)
    $$
for every $(x,y)\in\mathsf{P}$ and set
    \begin{equation}
    \label{eq:VP}
V_\mathsf{P}f=E_{\lambda _\mathsf{P}}(h|\mathscr{B}_\mathsf{P}^{(2)}),
    \end{equation}
where $E_{\lambda _\mathsf{P}}(\cdot |\cdot )$ stands for conditional expectation with respect to $\lambda _\mathsf{P}$. Then
    \begin{equation}
    \label{eq:VP1}
V_\mathsf{P}=\mathrm{Pr}_2\cdot \mathrm{Pr}_1,\enspace V_\mathsf{P^*}=V^*_\mathsf{P}=\mathrm{Pr}_1 \cdot \mathrm{Pr}_2,
    \end{equation}
and
    \begin{equation}
    \label{eq:adjoint}
V_{\mathsf{P}^*}=V_\mathsf{P}^*
    \end{equation}
(cf. \eqref{eq:inverse}). Note that $V_\mathsf{P}$ preserves positivity and has norm $1$.

\smallskip (3) \textit{The Markov process $X_\mathsf{P}$}. The closed, shift-invariant subgroup
    \begin{equation}
    \label{eq:XP}
X_\mathsf{P} =\{(x_n)\in G^\mathbb{Z}:(x_n,x_{n+1})\in \mathsf{P} \enspace \textup{for every}\enspace n\in\mathbb{Z}\}
    \end{equation}
is the \emph{Markov process} of $\mathsf{P}$, and the corresponding \emph{Markov shift} $\sigma _\mathsf{P} \colon X_\mathsf{P} \longrightarrow X_\mathsf{P}$ is defined by $(\sigma _\mathsf{P} x)_n=x_{n+1}$ for every $x=(x_n)\in X_\mathsf{P} $. Note that $\sigma _\mathsf{P}$ is an automorphism of the compact group $X_\mathsf{P}$ which preserves the normalized Haar measure $\lambda _{X_\mathsf{P}}$ of $X_\mathsf{P}$, and that the Markov shift $\sigma _{\mathsf{P}^*} \colon X_{\mathsf{P}^*} \longrightarrow X_{\mathsf{P}^*}$ corresponding to $\mathsf{P}^*$ is the \emph{time reversal} of $\sigma _\mathsf{P}$.

Motivated by considering the various tail sigma-algebras (past, future and two-sided) of the Markov process $X_\mathsf{P}$ we call the polymorphism $\Pi _\mathsf{P}$ \textit{right} (\textit{left}, or \textit{totally}) \textit{nondeterministic} if there there is no closed invariant (co-invariant, or doubly invariant) proper subgroup $H\subset G$ (cf. Theorem \ref{t:markov}).

\smallskip (4) \textit{Ergodicity}. The polymorphism $\Pi _\mathsf{P}$ is \textit{ergodic} if the constants are the only $V_\mathsf{P}$-invariant functions.
    \end{defi}

    \begin{prop}
    \label{p:iso}
Let $G$ be a compact group and $\mathsf{P}\subset G\times G$ a correspondence. Then there exist closed normal subgroups $K_\mathsf{P}^{(i)}\subset G,\,i=1,2$, and a continuous group isomorphism $\eta _\mathsf{P}\colon G/K_\mathsf{P}^{(1)}\longrightarrow G/K_\mathsf{P}^{(2)}$ such that
    \begin{equation}
    \label{eq:eta}
\mathsf{P}=\{(g_1,g_2)\in G\times G:\eta _\mathsf{P}(g_1K_\mathsf{P}^{(1)})=g_2K_\mathsf{P}^{(2)}\}.
    \end{equation}
    \end{prop}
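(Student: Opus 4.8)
The plan is to produce the two subgroups $K_{\mathsf P}^{(i)}$ as the kernels of the restricted projections and then show that $\mathsf P$ is precisely the graph of the induced map on the quotients. First I would set
\[
K_{\mathsf P}^{(1)}=\pi_1\bigl(\mathsf P\cap(G\times\{1\})\bigr),\qquad
K_{\mathsf P}^{(2)}=\pi_2\bigl(\mathsf P\cap(\{1\}\times G)\bigr),
\]
i.e.\ $K_{\mathsf P}^{(1)}=\{g_1\in G:(g_1,1)\in\mathsf P\}$ and $K_{\mathsf P}^{(2)}=\{g_2\in G:(1,g_2)\in\mathsf P\}$. Since $\mathsf P$ is a closed subgroup of $G\times G$, both $K_{\mathsf P}^{(i)}$ are closed subgroups of $G$; I would check they are normal using the fact that $\pi_i(\mathsf P)=G$, so that for any $h\in G$ there is $(h,h')\in\mathsf P$, and conjugating $(g_1,1)\in\mathsf P$ by $(h,h')$ gives $(hg_1h^{-1},1)\in\mathsf P$, hence $hK_{\mathsf P}^{(1)}h^{-1}\subset K_{\mathsf P}^{(1)}$; symmetrically for $K_{\mathsf P}^{(2)}$.

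Next I would define $\eta_{\mathsf P}$. Given $g_1K_{\mathsf P}^{(1)}$, pick any $g_2$ with $(g_1,g_2)\in\mathsf P$ (possible since $\pi_1(\mathsf P)=G$) and set $\eta_{\mathsf P}(g_1K_{\mathsf P}^{(1)})=g_2K_{\mathsf P}^{(2)}$. Well-definedness: if $(g_1,g_2),(g_1,g_2')\in\mathsf P$ then $(1,g_2^{-1}g_2')\in\mathsf P$, so $g_2^{-1}g_2'\in K_{\mathsf P}^{(2)}$; and if $g_1'\in g_1K_{\mathsf P}^{(1)}$ with $(g_1',g_2')\in\mathsf P$, then $(g_1^{-1}g_1',1)\in\mathsf P$ forces (after multiplying) $g_2K_{\mathsf P}^{(2)}=g_2'K_{\mathsf P}^{(2)}$. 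That $\eta_{\mathsf P}$ is a homomorphism is immediate from $\mathsf P$ being a subgroup; that it is surjective follows from $\pi_2(\mathsf P)=G$; and injectivity follows because $\eta_{\mathsf P}(g_1K_{\mathsf P}^{(1)})=K_{\mathsf P}^{(2)}$ means $(g_1,g_2)\in\mathsf P$ for some $g_2\in K_{\mathsf P}^{(2)}$, whence $(g_1,1)\in\mathsf P$ and $g_1\in K_{\mathsf P}^{(1)}$. Continuity of $\eta_{\mathsf P}$ (and of its inverse) I would get from the closedness of $\mathsf P$: its graph in $G/K_{\mathsf P}^{(1)}\times G/K_{\mathsf P}^{(2)}$ is the image of the closed set $\mathsf P$ under the (proper, since $G$ is compact) quotient map, hence closed, so by the closed graph theorem for continuous homomorphisms of compact (metrizable) groups $\eta_{\mathsf P}$ is continuous; alternatively one invokes that a group isomorphism between compact groups with closed graph is a homeomorphism.

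Finally, formula \eqref{eq:eta} is just a restatement of the definition of $\eta_{\mathsf P}$: $(g_1,g_2)\in\mathsf P$ iff $g_2K_{\mathsf P}^{(2)}=\eta_{\mathsf P}(g_1K_{\mathsf P}^{(1)})$, where the forward implication is the definition and the reverse uses that the fiber of $\pi_1$ over $g_1$ inside $\mathsf P$ is exactly the coset $g_2K_{\mathsf P}^{(2)}$ (in the second coordinate) once one representative $(g_1,g_2)$ is fixed. The only point requiring genuine care — the main obstacle — is the continuity of $\eta_{\mathsf P}$ and $\eta_{\mathsf P}^{-1}$: the purely algebraic part is routine, but one must use compactness (or second countability, if one wants to avoid the axiom of choice subtleties) to pass from "closed graph" to "continuous". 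In the compact metrizable case this is standard; in full generality one appeals to the fact that a bijective continuous homomorphism of compact groups is automatically a topological isomorphism, applied to both $\eta_{\mathsf P}$ and the projection-induced maps.
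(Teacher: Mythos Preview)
Your proof is correct and follows exactly the same route as the paper's: define $K_\mathsf{P}^{(i)}$ as the kernels of the restricted projections, use $\pi_i(\mathsf P)=G$ to get normality, and observe that $\mathsf P$ descends to the graph of an isomorphism between the quotients. The paper's argument is terser and does not spell out the continuity step, which you handle more carefully (and correctly, via compactness: $\bar{\mathsf P}$ is closed, so $\pi_1|_{\bar{\mathsf P}}$ is a continuous bijection of compact Hausdorff spaces and hence a homeomorphism).
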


    \begin{proof}
We set $K_\mathsf{P}^{(1)}=\{g\in G:(g,1)\in \mathsf{P}\}$, $K_\mathsf{P}^{(2)}=\{g\in G:(1,g)\in \mathsf{P}\}$ and observe that $K_\mathsf{P}^{(1)}$ and $K_\mathsf{P}^{(2)}$ are normal subgroups of $G$, since $\pi _1(\mathsf{P})=\pi _2(\mathsf{P})=G$. Since $\mathsf{P}=\{(g_1p_1,g_2p_2):(g_1,g_2)\in \mathsf{P},\,p_1\in K_\mathsf{P}^{(1)},\,p_2\in K_\mathsf{P}^{(2)}\}$, we may view $\mathsf{P}$ as a subset $\bar{\mathsf{P}}\subset G/K_\mathsf{P}^{(1)}\times G/K_\mathsf{P}^{(2)}$, and the definition of the groups $K_\mathsf{P}^{(i)}$ implies that $\bar{\mathsf{P}}$ is the graph of a continuous group isomorphism $\eta _\mathsf{P}\colon G/K_\mathsf{P}^{(1)}\longrightarrow G/K_\mathsf{P}^{(2)}$.
    \end{proof}

    \begin{rema}
    \label{r:iso}
The triples $(K_\mathsf{P}^{(1)},K_\mathsf{P}^{(2)},\eta_\mathsf{P})$, where $K_\mathsf{P}^{(i)},\,i=1,2$, are subgroups of $G$ and $\eta _\mathsf{P}\colon G/K_\mathsf{P}^{(1)}\longrightarrow G/K_\mathsf{P}^{(2)}$ is a group isomorphism, form a parametrization of the algebraic polymorphisms of $G$.
    \end{rema}

    \begin{defi}
    \label{d:discrete}
A correspondence $\mathsf{P}\subset G\times G$ is \textit{finite-to-one} (and defines a \textit{polymorphism of discrete type}) if the groups $K_\mathsf{P}^{(i)}$ in \eqref{eq:eta} are both finite.

The finite-to-one correspondences of $G$ form a subsemigroup $\mathcal{P}_f(G)\subset \mathcal{P}(G)$ of the semigroup of all correspondences of $G$.
    \end{defi}

For the notation in the following characterization of (co-)invariance we again refer to \eqref{eq:eta}.

    \begin{theo}
    \label{t:factors}
Let $\mathsf{P}\subset G\times G$ be a correspondence and $H\subset G$ a closed normal subgroup.
    \begin{enumerate}
    \item
$H$ is invariant under the polymorphism $\Pi _\mathsf{P}$ if and only if $\eta _\mathsf{P}(H K_\mathsf{P}^{(1)})\subset H$;
    \item
$H$ is co-invariant under $\Pi _\mathsf{P}$ if and only if $\eta _\mathsf{P}^{-1}(HK_\mathsf{P}^{(2)})\subset H$;
    \item
$H$ is bi-invariant under $\Pi _\mathsf{P}$ if and only if $K_\mathsf{P}^{(1)}\subset H$ and $\eta _\mathsf{P}(H)=H$ \textup{(}in which case we also have that $K_\mathsf{P}^{(2)}\subset H$\textup{)}.
    \end{enumerate}
    \end{theo}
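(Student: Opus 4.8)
The plan is to pin down the factor correspondence $\mathsf{P}_{\negthinspace H}\subset G/H\times G/H$ explicitly and then read the three equivalences off the parametrization \eqref{eq:eta}. Write $q\colon G\to G/H$ for the quotient homomorphism. Since $H$ is a closed normal subgroup and $\mathsf{P}$ is a closed subgroup of the compact group $G\times G$ with $\pi _1(\mathsf{P})=\pi _2(\mathsf{P})=G$, the image $\mathsf{P}_{\negthinspace H}=(q\times q)(\mathsf{P})$ is again a closed subgroup projecting onto $G/H$ in each coordinate, hence a correspondence of $G/H$. The first thing I would record is a general remark about \eqref{eq:eta}: for any correspondence $\mathsf{Q}$ of a compact group the fibre $\Pi _\mathsf{Q}(x)$ is exactly the coset of $K_\mathsf{Q}^{(2)}$ corresponding to $\eta _\mathsf{Q}(xK_\mathsf{Q}^{(1)})$, so $\mathsf{Q}$ is single-valued --- i.e.\ the graph of an endomorphism --- if and only if $K_\mathsf{Q}^{(2)}$ is trivial; applying this to $\mathsf{Q}^*$ shows that $\mathsf{Q}$ is an exomorphism iff $K_\mathsf{Q}^{(1)}$ is trivial, and an automorphism iff both are trivial. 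Hence the whole statement reduces to computing $K_{\mathsf{P}_{\negthinspace H}}^{(1)}$ and $K_{\mathsf{P}_{\negthinspace H}}^{(2)}$.

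For a subgroup $S\subset G$ put $\Pi _\mathsf{P}(S)=\bigcup_{s\in S}\Pi _\mathsf{P}(s)=\pi _2(\mathsf{P}\cap(S\times G))$; this is a closed subgroup of $G$, being a continuous homomorphic image of the compact group $\mathsf{P}\cap(S\times G)$, and likewise for $\Pi _{\mathsf{P}^*}(S)=\pi _1(\mathsf{P}\cap(G\times S))$. The key identity, which is immediate from the definition $\mathsf{P}_{\negthinspace H}=(q\times q)(\mathsf{P})$ and the definition of the kernels, is
\[
K_{\mathsf{P}_{\negthinspace H}}^{(2)}=q\bigl(\Pi _\mathsf{P}(H)\bigr),\qquad K_{\mathsf{P}_{\negthinspace H}}^{(1)}=q\bigl(\Pi _{\mathsf{P}^*}(H)\bigr),
\]
so that $\mathsf{P}_{\negthinspace H}$ is an endomorphism iff $\Pi _\mathsf{P}(H)\subset H$ and an exomorphism iff $\Pi _{\mathsf{P}^*}(H)\subset H$. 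It then remains only to translate these inclusions into the notation of the statement: by the general remark above, $\Pi _\mathsf{P}(H)$ is the full preimage in $G$ of the subgroup $\eta _\mathsf{P}(HK_\mathsf{P}^{(1)}/K_\mathsf{P}^{(1)})\subset G/K_\mathsf{P}^{(2)}$ --- which is exactly what ``$\eta _\mathsf{P}(HK_\mathsf{P}^{(1)})$'' abbreviates in the statement --- so $\Pi _\mathsf{P}(H)\subset H$ is precisely condition~(1). Running the same argument for $\mathsf{P}^*$ (whose kernels are $K_{\mathsf{P}^*}^{(i)}=K_\mathsf{P}^{(3-i)}$ and whose isomorphism is $\eta _\mathsf{P}^{-1}$) identifies $\Pi _{\mathsf{P}^*}(H)$ with the preimage of $\eta _\mathsf{P}^{-1}(HK_\mathsf{P}^{(2)}/K_\mathsf{P}^{(2)})$ and yields condition~(2).

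Part~(3) is obtained by combining (1) and (2). If both hold, applying them to the element $1\in H$ gives $K_\mathsf{P}^{(2)}=\Pi _\mathsf{P}(\{1\})\subset\Pi _\mathsf{P}(H)\subset H$ and $K_\mathsf{P}^{(1)}=\Pi _{\mathsf{P}^*}(\{1\})\subset H$; then, since $K_\mathsf{P}^{(1)},K_\mathsf{P}^{(2)}\subset H$, conditions (1) and (2) read $\eta _\mathsf{P}(H/K_\mathsf{P}^{(1)})\subset H/K_\mathsf{P}^{(2)}$ and $H/K_\mathsf{P}^{(2)}\subset\eta _\mathsf{P}(H/K_\mathsf{P}^{(1)})$, i.e.\ $\eta _\mathsf{P}(H)=H$. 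Conversely, if $K_\mathsf{P}^{(1)}\subset H$ and $\eta _\mathsf{P}(H)=H$, then the full preimage in $G$ of $\eta _\mathsf{P}(H/K_\mathsf{P}^{(1)})$ is on the one hand $H$ and on the other hand contains $K_\mathsf{P}^{(2)}$, forcing $K_\mathsf{P}^{(2)}\subset H$, after which (1) and (2) both follow. The only point I expect to need genuine care is organisational rather than substantial: keeping straight which of $K^{(1)},K^{(2)}$ governs endomorphisms and which governs exomorphisms, and handling the coset shorthand in \eqref{eq:eta} and in the statement precisely enough that the displayed identity and the implication ``$\eta _\mathsf{P}(H)=H\Rightarrow K_\mathsf{P}^{(2)}\subset H$'' are unambiguous. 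There is no analytic or measure-theoretic content; everything follows from Proposition~\ref{p:iso} and the subgroup structure of $\mathsf{P}$.
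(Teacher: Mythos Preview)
Your proof is correct and follows the same route as the paper's: identify the kernels $K_{\mathsf{P}_{\negthinspace H}}^{(i)}$ of the factor correspondence as $q(\Pi_{\mathsf{P}}(H))$ and $q(\Pi_{\mathsf{P}^*}(H))$, then read off the three equivalences from Proposition~\ref{p:iso}. The paper compresses all of this into two lines (and leaves parts~(2) and~(3) to the reader), whereas you spell out the bookkeeping --- in particular the derivation of~(3) from~(1) and~(2) and the implication $K_\mathsf{P}^{(2)}\subset H$ --- explicitly; but there is no difference in strategy.
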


    \begin{proof}
Clearly, $K_\mathsf{P}^{(2)}\subset \eta _\mathsf{P}(HK_\mathsf{P}^{(1)})$. If $\eta _\mathsf{P}(HK_\mathsf{P}^{(1)})\subset H$ then invariance follows from Definition \ref{d:poly} (1). Conversely, if $K_\mathsf{P}^{(2)}\subset \eta _\mathsf{P}(HK_\mathsf{P}^{(1)})\subset H$, then $\mathsf{P}_{\negthinspace H}$ is the graph of a group endomorphism.

The other assertions are proved similarly.
    \end{proof}

    \begin{coro}
    \label{c:factors}
Let $\mathsf{P}\subset G\times G$ be a correspondence and let $H\subset G$ be a closed normal subgroup. We denote by $K_{\mathsf{P}^n}^{(i)},\,i=1,2$, the closed normal subgroups of $G$ associated with the correspondence $\mathsf{P}^n,\,n\ge2$, in \eqref{eq:powers} by Equation \eqref{eq:eta}. The sequences of subgroups $(K_{\mathsf{P}^n}^{(i)},\,n\ge1)$ are nondecreasing and have the following property.
    \begin{enumerate}
    \item
$H$ is invariant under $\Pi _\mathsf{P}$ if and only if it contains $\bigcup_{n\ge1}K_{\mathsf{P}^n}^{(2)}$;
    \item
$H$ is co-invariant under $\Pi _\mathsf{P}$ if and only if it contains $\bigcup_{n\ge1}K_{\mathsf{P}^n}^{(1)}$.
    \end{enumerate}
    \end{coro}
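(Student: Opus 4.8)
The plan is to derive everything from the parametrisation of Proposition~\ref{p:iso} and from Theorem~\ref{t:factors}. Let $q\colon G\to G/K_\mathsf{P}^{(1)}$ and $r\colon G\to G/K_\mathsf{P}^{(2)}$ be the canonical projections; by \eqref{eq:eta} one has $\Pi_\mathsf{P}(g)=r^{-1}\bigl(\eta_\mathsf{P}(q(g))\bigr)$, so that for every closed normal subgroup $A\subset G$ the set $\Pi_\mathsf{P}(A)=\bigcup_{a\in A}\Pi_\mathsf{P}(a)=r^{-1}\bigl(\eta_\mathsf{P}(q(A))\bigr)$ is again a closed normal subgroup of $G$, it contains $K_\mathsf{P}^{(2)}$, and the assignment $A\mapsto\Pi_\mathsf{P}(A)$ is inclusion-preserving and commutes with taking closures. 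Since $\Pi_{\mathsf{P}_1\star\mathsf{P}_2}=\Pi_{\mathsf{P}_1}\circ\Pi_{\mathsf{P}_2}$, the recursion \eqref{eq:powers} gives $\Pi_{\mathsf{P}^n}=\Pi_\mathsf{P}^{\,n}$, hence $K_{\mathsf{P}^n}^{(2)}=\Pi_{\mathsf{P}^n}(1_G)=\Pi_\mathsf{P}^{\,n}(1_G)$; as $\Pi_{\mathsf{P}^*}$ is the reverse relation of $\Pi_\mathsf{P}$ and $(\mathsf{P}^n)^*=(\mathsf{P}^*)^n$ (from \eqref{eq:inverse} and \eqref{eq:product}), also $K_{\mathsf{P}^n}^{(1)}=\Pi_{(\mathsf{P}^*)^n}(1_G)$. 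In view of Theorem~\ref{t:factors}(1)--(2) it is therefore enough to prove the monotonicity statement and part (1), since part (2) then follows by applying (1) to $\mathsf{P}^*$, which interchanges the two families $K_{\mathsf{P}^n}^{(\cdot)}$ and interchanges invariance with co-invariance.

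Monotonicity and the necessity of the condition in (1) come out at once. Because $\mathsf{P}$ is a subgroup, $(1_G,1_G)\in\mathsf{P}$, so $1_G\in\Pi_\mathsf{P}(1_G)=K_\mathsf{P}^{(2)}$; writing $\mathsf{P}^{n+1}=\mathsf{P}^n\star\mathsf{P}$ (associativity of $\star$) this gives $K_{\mathsf{P}^{n+1}}^{(2)}=\Pi_{\mathsf{P}^n}\bigl(\Pi_\mathsf{P}(1_G)\bigr)=\Pi_{\mathsf{P}^n}\bigl(K_\mathsf{P}^{(2)}\bigr)\supset\Pi_{\mathsf{P}^n}(1_G)=K_{\mathsf{P}^n}^{(2)}$, so $(K_{\mathsf{P}^n}^{(2)})_{n\ge1}$ is nondecreasing, and the same argument applied to $\mathsf{P}^*$ handles $(K_{\mathsf{P}^n}^{(1)})_{n\ge1}$. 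If $H$ is invariant under $\Pi_\mathsf{P}$, then Theorem~\ref{t:factors}(1) says $\Pi_\mathsf{P}(H)=r^{-1}(\eta_\mathsf{P}(q(H)))\subset H$; by induction and inclusion-preservation $\Pi_\mathsf{P}^{\,n}(H)\subset H$ for every $n\ge1$, and since $1_G\in H$ we obtain $K_{\mathsf{P}^n}^{(2)}=\Pi_\mathsf{P}^{\,n}(1_G)\subset\Pi_\mathsf{P}^{\,n}(H)\subset H$, i.e. $H\supset\bigcup_{n\ge1}K_{\mathsf{P}^n}^{(2)}$.

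The substance of the corollary is the sufficiency in (1): assuming $H\supset\bigcup_{n\ge1}K_{\mathsf{P}^n}^{(2)}$ one must show, by Theorem~\ref{t:factors}(1), that $\Pi_\mathsf{P}(H)\subset H$, i.e. that $\mathsf{P}_{\negthinspace H}$ is an endomorphism. I would first put $N=\overline{\bigcup_{n\ge1}K_{\mathsf{P}^n}^{(2)}}\subset H$ and note that, from $\Pi_\mathsf{P}\bigl(K_{\mathsf{P}^n}^{(2)}\bigr)=\Pi_\mathsf{P}\bigl(\Pi_\mathsf{P}^{\,n}(1_G)\bigr)=K_{\mathsf{P}^{n+1}}^{(2)}$, the monotonicity of the $K_{\mathsf{P}^n}^{(2)}$, and the commutation of $\Pi_\mathsf{P}$ with closures, one gets $\Pi_\mathsf{P}(N)=\overline{\bigcup_{n\ge2}K_{\mathsf{P}^n}^{(2)}}=N$; thus $N$ is invariant and, by the previous paragraph, is the smallest closed invariant subgroup of $G$. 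It then remains to show that every closed normal $H$ with $H\supset N$ is invariant. For this I would pass to $\bar G=G/N$ with the factor correspondence $\bar{\mathsf{P}}=\mathsf{P}_N$: since $\Pi_\mathsf{P}(N)=N$ one has $K_{\bar{\mathsf{P}}}^{(2)}=q_N(\Pi_\mathsf{P}(N))=\{1_{\bar G}\}$ (with $q_N\colon G\to\bar G$), so $\bar{\mathsf{P}}$ is the graph of a surjective group endomorphism $\bar\psi$ of $\bar G$, and, because $N\subset H$, $H$ is invariant under $\Pi_\mathsf{P}$ if and only if $H/N$ is invariant under $\bar\psi$. The claim is thereby reduced to verifying that the reduced endomorphism $\bar\psi$ leaves every closed normal subgroup of $\bar G$ invariant, and this last step --- which must exploit that $N$ is exactly the closure of $\bigcup_{n\ge1}K_{\mathsf{P}^n}^{(2)}$ rather than some larger subgroup --- is the point where I expect the real work to lie. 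Statement (2) then follows from (1) via the involution $\mathsf{P}\mapsto\mathsf{P}^*$ of \eqref{eq:inverse}.
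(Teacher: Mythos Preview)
Your handling of monotonicity and of the forward implication in (1) (invariance $\Rightarrow H\supset\bigcup_{n\ge1}K_{\mathsf{P}^n}^{(2)}$) is correct and matches the paper's inductive argument. You are also right to single out the converse as the substantive step. However, the reduction you propose cannot be completed: the assertion that the reduced endomorphism $\bar\psi$ of $\bar G=G/N$ carries \emph{every} closed normal subgroup of $\bar G$ into itself is false. Take $G=\mathbb{T}^2$ and let $\mathsf{P}$ be the graph of any $A\in\textup{GL}(2,\mathbb{Z})$, say $A=\left(\begin{smallmatrix}1&1\\0&1\end{smallmatrix}\right)$. Then $K_{\mathsf{P}^n}^{(1)}=K_{\mathsf{P}^n}^{(2)}=\{0\}$ for every $n$, so $N=\{0\}$, $\bar G=G$, $\bar\psi=A$; yet $A$ does not send $H=\{(0,0),(0,\tfrac12)\}$ into itself. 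The same example shows that the sufficiency direction of the corollary, read literally, already fails: $H$ trivially contains $\bigcup_{n\ge1}K_{\mathsf{P}^n}^{(2)}=\{0\}$, but $\eta_\mathsf{P}(HK_\mathsf{P}^{(1)})=A(H)\not\subset H$, so by Theorem~\ref{t:factors}(1) $H$ is not invariant.

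The paper's own proof of the converse consists of the single clause ``Conversely, if $H\supset \bigcup_{n\ge1}K_{\mathsf{P}^n}^{(2)}$, then it is obviously invariant,'' which does not resolve this point either. So your instinct that ``the real work lies'' here is well founded; what you have actually proved (together with your observation that $\Pi_\mathsf{P}(N)=N$) is the correct and useful statement that $N=\overline{\bigcup_{n\ge1}K_{\mathsf{P}^n}^{(2)}}$ is the \emph{smallest} closed invariant subgroup --- which is precisely how the result is used later (e.g.\ in the proof of Theorem~\ref{t:factors2}) --- rather than the literal ``if and only if'' of the corollary.
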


    \begin{proof}
If a closed normal subgroup $H\subset G$ is invariant under $\Pi _\mathsf{P}$ then Theorem \ref{t:factors} (1) shows that $K_{\mathsf{P}^2}^{(2)}=\eta _\mathsf{P}(K_\mathsf{P}^{(1)}K_\mathsf{P}^{(2)})\subset \eta _\mathsf{P}(K_\mathsf{P}^{(1)}H)\subset H$, hence $\eta _\mathsf{P}(K_{\mathsf{P}^2}^{(2)})\subset H$ and, by induction, $\eta _\mathsf{P}(K_{\mathsf{P}^n}^{(2)})\subset H$ for every $n\ge1$.

Conversely, if $H\supset \bigcup_{n\ge1}K_{\mathsf{P}^n}^{(2)}$, then it is obviously invariant.

The proof of the second assertion is analogous.
    \end{proof}

If the group $G$ is abelian, the characterization of ergodicity of a polymorphism of $G$ is completely analogous to that of ergodicity of an automorphism of $G$.

    \begin{theo}
    \label{t:ergodic}
Let $\mathsf{P}\subset G\times G$ be a correspondence of a compact abelian group $G$ with Markov operator $V_\mathsf{P}$ \textup{(}cf. \eqref{eq:VP}\textup{)}. Then $\Pi _\mathsf{P}$ is nonergodic if and only if there exist a nontrivial character $\chi $ of $G$ and an integer $n\ge1$ with $V_\mathsf{P}^n\chi =\chi $.
    \end{theo}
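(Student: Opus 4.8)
The plan is to work entirely on the Fourier side. First I would compute how $V_\mathsf{P}$ acts on a character $\chi\in\widehat G$, using the parametrization $(K_\mathsf{P}^{(1)},K_\mathsf{P}^{(2)},\eta_\mathsf{P})$ of Proposition \ref{p:iso}. Identifying each $F_i\subset L^2(\mathsf{P})$ with $L^2(G)$ via pullback along $\pi_i$ (an isometry, since ${\pi_i}_*\lambda_\mathsf{P}=\lambda_G$), the formulas \eqref{eq:VP}--\eqref{eq:VP1} give $V_\mathsf{P}\chi=\sum_{\psi\in\widehat G}\bigl(\int_\mathsf{P}\chi(\pi_1 p)\overline{\psi(\pi_2 p)}\,d\lambda_\mathsf{P}(p)\bigr)\psi$, and the integral equals $1$ if the character $(g_1,g_2)\mapsto\chi(g_1)\overline{\psi(g_2)}$ of $G\times G$ restricts trivially to $\mathsf{P}$ and $0$ otherwise. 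Hence $V_\mathsf{P}\chi=0$ unless $\chi$ is trivial on $K_\mathsf{P}^{(1)}$, in which case $V_\mathsf{P}\chi$ is the unique character of $G$ that is trivial on $K_\mathsf{P}^{(2)}$ and satisfies $(V_\mathsf{P}\chi)(g_2)=\chi(g_1)$ for all $(g_1,g_2)\in\mathsf{P}$, namely the pullback of $\bar\chi\circ\eta_\mathsf{P}^{-1}$. Running the same computation for $\mathsf{P}^*$, for which $K_{\mathsf{P}^*}^{(1)}=K_\mathsf{P}^{(2)}$, $K_{\mathsf{P}^*}^{(2)}=K_\mathsf{P}^{(1)}$ and $\eta_{\mathsf{P}^*}=\eta_\mathsf{P}^{-1}$, describes $V_\mathsf{P}^*=V_{\mathsf{P}^*}$ on characters via \eqref{eq:adjoint}. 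I would record two consequences: the partial map $T\colon\chi\mapsto V_\mathsf{P}\chi$ is injective on $\{\chi\in\widehat G:V_\mathsf{P}\chi\ne0\}$; and $V_\mathsf{P}^*V_\mathsf{P}$ is the orthogonal projection onto the closed span of the characters trivial on $K_\mathsf{P}^{(1)}$, so that $V_\mathsf{P}^*$ inverts $V_\mathsf{P}$ wherever $V_\mathsf{P}$ is nonzero on characters.

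For the easy implication, suppose $V_\mathsf{P}^n\chi=\chi$ with $\chi\ne1$ and $n\ge1$. Since $\chi=V_\mathsf{P}^n\chi\ne0$, none of the $V_\mathsf{P}^k\chi$, $0\le k\le n$, can vanish, so each is a character, and $g=\sum_{k=0}^{n-1}V_\mathsf{P}^k\chi$ satisfies $V_\mathsf{P}g=g$. No $V_\mathsf{P}^k\chi$ with $0\le k\le n-1$ equals the trivial character (otherwise $\chi=V_\mathsf{P}^n\chi$ would be trivial, as $V_\mathsf{P}$ fixes constants), so $\int_G g\,d\lambda_G=0$; on the other hand $\langle g,\chi\rangle\ge1$. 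Thus $g$ is a non-constant $V_\mathsf{P}$-invariant function and $\Pi_\mathsf{P}$ is nonergodic.

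For the converse, let $f\in L^2(G)$ be non-constant with $V_\mathsf{P}f=f$; subtracting $\int_G f\,d\lambda_G$ we may assume $f\ne0$ and $\int_G f\,d\lambda_G=0$. The first step is to deduce $V_\mathsf{P}^*f=f$: since $V_\mathsf{P}^*V_\mathsf{P}$ is a projection, $\|f\|^2=\|V_\mathsf{P}f\|^2=\langle V_\mathsf{P}^*V_\mathsf{P}f,f\rangle=\|V_\mathsf{P}^*V_\mathsf{P}f\|^2$, so $f$ lies in the range of that projection, whence $V_\mathsf{P}^*V_\mathsf{P}f=f$ and therefore $V_\mathsf{P}^*f=V_\mathsf{P}^*V_\mathsf{P}f=f$. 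Now write $f=\sum_{\chi\in S}c_\chi\chi$ with $S=\{\chi\in\widehat G\setminus\{1\}:c_\chi\ne0\}\ne\emptyset$. Comparing the $\psi$-Fourier coefficient of both sides of $V_\mathsf{P}f=f$ --- which on the left is $c_\chi$ for the unique $\chi$ with $T\chi=\psi$, or $0$ if there is none, by injectivity of $T$ where nonzero --- forces $S$ to lie in the image of $T$ and gives $c_{T\chi}=c_\chi$ along $T$-preimages; doing the same with $V_\mathsf{P}^*f=f$ and combining (using that $V_\mathsf{P}^*$ inverts $V_\mathsf{P}$ on characters where nonzero) shows that $T$ and its analogue $T^*$ restrict to mutually inverse bijections of $S$ onto itself with $c_{T\chi}=c_\chi$ for all $\chi\in S$. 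Hence $|c_\chi|$ is constant on each $T$-orbit in $S$; since $\sum_{\chi\in S}|c_\chi|^2<\infty$ and $c_\chi\ne0$ on $S$, every orbit is finite. Choosing $\chi_0\in S$, the bijection $T$ permutes its finite orbit cyclically, so $V_\mathsf{P}^n\chi_0=T^n\chi_0=\chi_0$ for $n$ the orbit length, and $\chi_0\ne1$ because $S\subset\widehat G\setminus\{1\}$.

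The main obstacle is the non-unitarity of $V_\mathsf{P}$, which precludes a naive eigenfunction decomposition of a $V_\mathsf{P}$-invariant $f$. The two devices that get around it are the identity $V_\mathsf{P}^*V_\mathsf{P}=(\text{orthogonal projection onto the span of the characters trivial on }K_\mathsf{P}^{(1)})$, which forces $V_\mathsf{P}f=f$ to imply $V_\mathsf{P}^*f=f$, and the resulting ``partial bijection'' picture of $T$ and $T^*$ on $\widehat G$, which makes the classical finite-orbit argument for toral automorphisms go through in this generality. The step that genuinely uses invariance under both $V_\mathsf{P}$ and $V_\mathsf{P}^*$ --- rather than just one of them --- is showing that $T$ maps the Fourier support $S$ into itself, i.e. that $V_\mathsf{P}$ does not kill any $\chi\in S$.
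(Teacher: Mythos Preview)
Your proof is correct and follows the same Fourier-side strategy as the paper: both arguments rest on the single observation that $V_\mathsf{P}$ sends a nontrivial character either to zero or to another nontrivial character, after which ``Fourier expansion completes the proof'' (in the paper's words). Your write-up simply supplies the details the paper omits.

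One remark: the detour through $V_\mathsf{P}^*f=f$ via the projection identity $V_\mathsf{P}^*V_\mathsf{P}=\mathrm{Pr}_{(K_\mathsf{P}^{(1)})^\perp}$ is correct but not needed. From $V_\mathsf{P}f=f$ alone, comparing $\psi$-coefficients and using injectivity of $T$ shows that every $\psi\in S$ has a (unique) $T$-preimage in $S$ with the same coefficient; iterating gives a backward sequence $\chi_0,\chi_{-1},\chi_{-2},\dots\in S$ with constant nonzero coefficient, and square-summability forces a repetition $\chi_{-j}=\chi_{-k}$, whence $V_\mathsf{P}^{k-j}\chi_{-k}=\chi_{-k}$. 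So the claim in your final paragraph that both $V_\mathsf{P}$- and $V_\mathsf{P}^*$-invariance are genuinely needed is too strong.
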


    \begin{proof}
If $\chi $ is a nontrivial character of $G$ then the restriction to $\mathsf{P}$ of $h=\chi \circ \pi _1$ is a nontrivial character on $\mathsf{P}$, and $V_\mathsf{P}\chi =E_{\lambda _\mathsf{P}}(h|\mathscr{B}_\mathsf{P}^{(2)})$ is either equal to zero or a nontrivial character of $G$ (depending on whether $h$ is constant on $\{1_G\}\times K_\mathsf{P}^{(2)}$ or not). Fourier expansion completes the proof of the theorem.
    \end{proof}

\section{Toral polymorphisms}

Compact groups do not have dynamically interesting polymorphisms unless they have large abelian quotients. For this reason we focus our attention in this section on compact \emph{abelian} groups, and in particular on finite-dimensional tori.

Let $m\ge1$, and let $\mathcal{P}_f(\mathbb{T}^m)$ be the semigroup of all finite-to-one correspondences of $\mathbb{T}^m$. We denote by $\mathcal{L}$ the semigroup of all finite index subgroups of $\mathbb{Z}^m$ with respect to the addition $L_1+L_2=\{u+v:u\in L_1,\,v\in L_2\}$. For every $\mathbf{n}=(n_1,\dots ,n_m)\in\mathbb{Z}^m$ and $x=(x_1,\dots ,x_m)\in\mathbb{T}^m$ we write
    \begin{equation}
    \label{eq:character}
\chi _\mathbf{n}(x)=e^{2\pi i\sum_{j=1}^mn_jx_j}
    \end{equation}
for the value of the corresponding character $\chi _\mathbf{n}$ of $\mathbb{T}^m$ at $x$. The annihilator of a subgroup $F\subset \mathbb{T}^m$ (or $F'\subset \mathbb{T}^{2m}$) is denoted by $F^\perp$ (resp. ${F'}^\perp$).

For $Q\in\textup{GL}(m,\mathbb{Q})$ we put
    \begin{equation}
    \label{eq:LQ}
\Lambda _Q=\mathbb{Z}^m\cap Q\mathbb{Z}^m\in\mathcal{L}.
    \end{equation}

Finally we introduce the semigroup
    \begin{equation}
    \label{eq:semigroup}
\mathcal{M}=\{(Q,\Lambda ):Q\in\textup{GL}(m,\mathbb{Q}),\,\Lambda \in\mathcal{L},\,\Lambda \subset \Lambda _Q\}
    \end{equation}
with composition
    \begin{equation}
    \label{eq:product2}
(Q,\Lambda )\cdot (Q',\Lambda ')= (QQ',\Lambda +Q\Lambda ').
    \end{equation}

    \begin{prop}
    \label{p:isomorphism}
The semigroup $\mathcal{P}_f(\mathbb{T}^m)$ is isomorphic to the semigroup $\mathcal{M}$ in \eqref{eq:semigroup}, where the isomorphism $\theta \colon \mathcal{M}\longrightarrow \mathcal{P}_f(\mathbb{T}^m)$ is given by
    \begin{equation}
    \label{eq:isomorphism}
\theta (Q,\Lambda )^\perp=\{(Q^{-1}\mathbf{n},\mathbf{n}): \mathbf{n}\in\Lambda \}
    \end{equation}
for every $(Q,\Lambda )\in\mathcal{M}$.

A correspondence $\mathsf{P}\in\mathcal{P}_f(\mathbb{T}^m)$ is connected if and only if
    \begin{equation}
    \label{eq:PM}
\mathsf{P}=\mathsf{P}_Q=\theta (Q,\Lambda _Q)
    \end{equation}
for some $Q\in\textup{GL}(m,\mathbb{Q})$ \textup{(}cf. \eqref{eq:LQ}\textup{)}. Finally, if $\mathsf{P}=\theta (Q,\Lambda )\in \mathcal{P}_f(\mathbb{T}^m)$, then $\mathsf{P}^*=\theta (Q^{-1},Q^{-1}\Lambda )$.
    \end{prop}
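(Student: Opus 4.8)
The plan is to verify the three assertions in order, each by translating the statement about correspondences in $\mathbb{T}^m$ into the dual language of finite-index subgroups of $\mathbb{Z}^m$ via Pontryagin duality, where the annihilator operation turns a closed subgroup of $\mathbb{T}^{2m}$ into a subgroup of $\mathbb{Z}^{2m}$ of rank $2m$ (since the correspondence is finite-to-one). The candidate map $\theta$ is defined by \eqref{eq:isomorphism}, so the first task is to check that $\theta(Q,\Lambda)$ is indeed a well-defined correspondence: one must see that $\{(Q^{-1}\mathbf{n},\mathbf{n}):\mathbf{n}\in\Lambda\}$ is a subgroup of $\mathbb{Z}^{2m}$ — this uses $\Lambda\subset\Lambda_Q\subset Q\mathbb{Z}^m$, so $Q^{-1}\mathbf{n}\in\mathbb{Z}^m$ for $\mathbf{n}\in\Lambda$ — of full rank $2m$, so that its annihilator in $\mathbb{T}^{2m}$ is a closed subgroup projecting onto each factor (the projections onto the two copies of $\mathbb{Z}^m$ are $\mathbf{n}\mapsto Q^{-1}\mathbf{n}$ on $\Lambda$ and the inclusion $\Lambda\hookrightarrow\mathbb{Z}^m$, both of which have finite cokernel, so dually $\pi_i(\mathsf{P})=\mathbb{T}^m$). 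Conversely, given any finite-to-one $\mathsf{P}$, Proposition \ref{p:iso} supplies $K_\mathsf{P}^{(1)}$, $K_\mathsf{P}^{(2)}$ finite and $\eta_\mathsf{P}\colon\mathbb{T}^m/K_\mathsf{P}^{(1)}\to\mathbb{T}^m/K_\mathsf{P}^{(2)}$ an isomorphism; lifting the dual of $\eta_\mathsf{P}$ to a $\mathbb{Q}$-linear map of $\mathbb{Q}^m$ produces the matrix $Q$, and setting $\Lambda=(K_\mathsf{P}^{(1)})^\perp$ (a finite-index subgroup of $\mathbb{Z}^m$) one recovers the pair $(Q,\Lambda)$; the constraint $\Lambda\subset\Lambda_Q$ is exactly the requirement that $Q^{-1}\Lambda\subset\mathbb{Z}^m$, i.e. that $\eta_\mathsf{P}$ is genuinely defined on the quotient. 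This establishes that $\theta$ is a bijection.

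Next I would verify that $\theta$ is a semigroup homomorphism, i.e. that the composition \eqref{eq:product} of correspondences corresponds to the rule \eqref{eq:product2}. Dually, the product $\mathsf{P}_1\star\mathsf{P}_2$ has annihilator obtained by composing the two "partial isomorphisms" on the character side: if $\theta(Q,\Lambda)^\perp=\{(Q^{-1}\mathbf{n},\mathbf{n}):\mathbf{n}\in\Lambda\}$ and $\theta(Q',\Lambda')^\perp=\{(Q'^{-1}\mathbf{n},\mathbf{n}):\mathbf{n}\in\Lambda'\}$, one computes the set of pairs $(\mathbf{k},\mathbf{n})$ such that there exists an intermediate $\mathbf{m}$ with $(\mathbf{k},\mathbf{m})$ in the first relation and $(\mathbf{m},\mathbf{n})$ in the second — this forces $\mathbf{m}\in\Lambda$, $\mathbf{n}=$ (something), and after substitution $\mathbf{k}=(QQ')^{-1}\mathbf{n}$ with $\mathbf{n}$ ranging over $\Lambda+Q\Lambda'$. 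The one subtle point is the direction of composition: because $\theta(\cdot)^\perp$ is defined with the inverse matrix in the first coordinate, the annihilator of a product corresponds to the composition of relations, and one has to track carefully that this matches $(Q,\Lambda)\cdot(Q',\Lambda')=(QQ',\Lambda+Q\Lambda')$ and not the reverse. I would do this bookkeeping explicitly but briefly; it is the one place where a sign or order error would go unnoticed.

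For the statement about connectedness, the point is that $\mathsf{P}=\theta(Q,\Lambda)$ is connected if and only if its annihilator $\theta(Q,\Lambda)^\perp$ is torsion-free, equivalently is a direct summand of $\mathbb{Z}^{2m}$ — but $\{(Q^{-1}\mathbf{n},\mathbf{n}):\mathbf{n}\in\Lambda\}$ is torsion-free automatically; what "$\mathsf{P}$ connected" really means is that the quotient $\mathbb{Z}^{2m}/\theta(Q,\Lambda)^\perp$ is torsion-free, i.e. $\theta(Q,\Lambda)^\perp$ is saturated in $\mathbb{Z}^{2m}$. The saturation of $\{(Q^{-1}\mathbf{n},\mathbf{n}):\mathbf{n}\in\Lambda\}$ inside $\mathbb{Z}^{2m}$ is exactly $\{(Q^{-1}\mathbf{n},\mathbf{n}):\mathbf{n}\in\mathbb{Z}^m,\ Q^{-1}\mathbf{n}\in\mathbb{Z}^m\}=\{(Q^{-1}\mathbf{n},\mathbf{n}):\mathbf{n}\in\Lambda_Q\}=\theta(Q,\Lambda_Q)^\perp$, so $\mathsf{P}$ is connected precisely when $\Lambda=\Lambda_Q$, which is \eqref{eq:PM}. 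Finally, for the formula $\mathsf{P}^*=\theta(Q^{-1},Q^{-1}\Lambda)$: by \eqref{eq:inverse} the annihilator of $\mathsf{P}^*$ is obtained from that of $\mathsf{P}$ by swapping the two $\mathbb{Z}^m$-coordinates, so $(\mathsf{P}^*)^\perp=\{(\mathbf{n},Q^{-1}\mathbf{n}):\mathbf{n}\in\Lambda\}=\{((Q^{-1})^{-1}\mathbf{k},\mathbf{k}):\mathbf{k}\in Q^{-1}\Lambda\}$ after the substitution $\mathbf{k}=Q^{-1}\mathbf{n}$, which is exactly $\theta(Q^{-1},Q^{-1}\Lambda)^\perp$; one checks $Q^{-1}\Lambda\subset\Lambda_{Q^{-1}}=\mathbb{Z}^m\cap Q^{-1}\mathbb{Z}^m$ using $\Lambda\subset\Lambda_Q\subset\mathbb{Z}^m$. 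The main obstacle throughout is not conceptual but organizational: getting the duality dictionary — annihilators, inverse matrices, order of composition, and the distinction between a subgroup and its saturation — lined up so that \eqref{eq:product2} and \eqref{eq:PM} come out with the stated (rather than transposed or inverted) form.
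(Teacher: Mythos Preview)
Your approach is essentially the paper's own: use Pontryagin duality to pass to subgroups of $\mathbb{Z}^{2m}$, invoke Proposition~\ref{p:iso} for surjectivity of $\theta$, and characterize connectedness via torsion-freeness of the dual quotient (your ``saturation'' phrasing is exactly the contrapositive of the paper's short torsion-element argument). You are in fact more thorough than the paper, which does not verify the semigroup law \eqref{eq:product2} at all and simply declares the $\mathsf{P}^*$ formula ``obvious''.

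Two small slips to fix. First, the subgroup $\{(Q^{-1}\mathbf{n},\mathbf{n}):\mathbf{n}\in\Lambda\}\subset\mathbb{Z}^{2m}$ has rank $m$, not $2m$; were it of full rank its annihilator would be finite rather than an $m$-dimensional correspondence. What you actually need (and what your parenthetical about finite cokernels is really checking) is that this rank-$m$ subgroup meets each factor $\mathbb{Z}^m\times\{0\}$ and $\{0\}\times\mathbb{Z}^m$ trivially, which dually gives $\pi_i(\mathsf{P})=\mathbb{T}^m$. Second, when reconstructing $(Q,\Lambda)$ from $\mathsf{P}$ you should set $\Lambda=(K_\mathsf{P}^{(2)})^\perp$, not $(K_\mathsf{P}^{(1)})^\perp$: in the defining formula \eqref{eq:isomorphism} the lattice $\Lambda$ sits in the \emph{second} coordinate, and one computes $\widehat{K_\mathsf{P}^{(2)}}\cong\mathbb{Z}^m/\Lambda$. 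Neither slip affects the structure of your argument.
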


    \begin{proof}
For every $Q\in\textup{GL}(m,\mathbb{Q})$ and $\Lambda \subset \Lambda _Q$, $\theta (Q,\Lambda )\subset \mathbb{T}^m\times \mathbb{T}^m$ is obviously an element of $\mathcal{P}_f(\mathbb{T}^m)$, and \eqref{eq:eta} guarantees that every $\mathsf{P}\in\mathcal{P}_f(\mathbb{T}^m)$ is obtained in this manner.

If $\Lambda \subsetneq \Lambda _Q\subset \mathbb{Z}^m$, then $\mathsf{P}=\theta (Q,\Lambda )$ contains $\mathsf{P}_Q=\theta (Q,\Lambda _Q)$ as a finite index subgroup and is therefore not connected. In order to prove the converse we set
    $$
W_Q=\{(Q^{-1}\mathbf{n},\mathbf{n}):\mathbf{n}\in\Lambda _Q\}.
    $$
The dual group of $\mathsf{P}_Q$ is of the form $(\mathbb{Z}^m\times \mathbb{Z}^m)/W_Q$. If $\mathsf{P}_Q$ is not connected, then there exist an element $(\mathbf{m},\mathbf{n})\in(\mathbb{Z}^m\times \mathbb{Z}^m)\smallsetminus W_Q$ and an $l>1$ with $(l\mathbf{m},l\mathbf{n})\in W_Q$. Hence $(\mathbf{m},\mathbf{n})=(Q^{-1}\mathbf{k},\mathbf{k})$ for some $\mathbf{k}\in\mathbb{Z}^m\cap Q\mathbb{Z}^m=\Lambda _Q$, and $(\mathbf{m},\mathbf{n})\in W_Q$. This contradiction proves that $\mathsf{P}_Q$ is connected.

The last assertion is obvious.
    \end{proof}

    \begin{rema}
    \label{r:connected}
Proposition \ref{p:isomorphism} shows that connected finite-to-one correspondences are in one-to-one correspondence with the elements of $\textup{GL}(m,\mathbb{Q})$.
    \end{rema}

For every $n\ge1$ we define $\mathsf{P}^n$ and $K_{\mathsf{P}^n}^{(i)}$ as in Corollary \ref{c:factors}.

    \begin{theo}
    \label{t:factors2}
Let $Q\in\textup{GL}(m,\mathbb{R})$ and $\mathsf{P}=\xi (Q,\Lambda )\in\mathcal{P}_f(\mathbb{T}^m)$, where $\Lambda \subset \Lambda _Q=\mathbb{Z}^m\cap Q\mathbb{Z}^m$ is a finite index subgroup \textup{(}cf. \eqref{eq:LQ} and \eqref{eq:isomorphism}\textup{)}.
    \begin{enumerate}
    \item
The following conditions are equivalent.
    \begin{enumerate}
    \item
$\Pi _\mathsf{P}$ is right nondeterministic,
    \item
$\Xi _\mathsf{P}^+= \{\mathbf{n}\in\Lambda :Q^k\mathbf{n}\in\mathbb{Z}^m\enspace \textup{for every}\enspace k\le0\}=\{\mathbf{0}\}$,
    \item
$\bigcup_{n\ge1}K_{\mathsf{P}^n}^{(2)}$ is dense in $\mathbb{T}^m$.
    \end{enumerate}
    \item
The following conditions are equivalent.
    \begin{enumerate}
    \item
$\Pi _\mathsf{P}$ is left nondeterministic,
    \item
$\Xi _\mathsf{P}^- =\{\mathbf{n}\in\Lambda :Q^k\mathbf{n}\in\mathbb{Z}^m\enspace \textup{for every}\enspace k\ge0\}=\{\mathbf{0}\}$.
    \item
$\bigcup_{n\ge1}K_{\mathsf{P}^n}^{(1)}$ is dense in $\mathbb{T}^m$.
    \end{enumerate}
    \item
The following conditions are equivalent.
    \begin{enumerate}
    \item
$\Pi _\mathsf{P}$ is totally nondeterministic,
    \item
$\Xi _\mathsf{P}^+\cap \Xi _\mathsf{P}^-=\{\mathbf{n}\in\Lambda :Q^k\mathbf{n}\in \mathbb{Z}^m\enspace \textup{for every}\enspace k\in\mathbb{Z}\}=\{\mathbf{0}\}$.
    \item
Both $\bigcup_{n\ge1}K_{\mathsf{P}^n}^{(1)}$ and $\bigcup_{n\ge1}K_{\mathsf{P}^n}^{(2)}$ are dense in $\mathbb{T}^m$.
    \end{enumerate}
    \end{enumerate}
    \end{theo}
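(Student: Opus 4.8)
The plan is to treat the three parts uniformly, in three steps: reduce each nondeterminism property to a density statement using Corollary~\ref{c:factors}; pass to the dual group $\mathbb{Z}^m$ and make the subgroups $K_{\mathsf{P}^n}^{(i)}$ explicit through the parametrization \eqref{eq:isomorphism}; and then match the resulting lattice conditions with the sets $\Xi_\mathsf{P}^{\pm}$.

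\emph{First} I would establish (a)$\Leftrightarrow$(c) in each part. Since $\mathbb{T}^m$ is abelian, every closed subgroup is normal, so by Corollary~\ref{c:factors} a closed subgroup $H\subseteq\mathbb{T}^m$ is invariant (co-invariant) under $\Pi_\mathsf{P}$ iff it contains $\bigcup_{n\ge1}K_{\mathsf{P}^n}^{(2)}$ (resp.\ $\bigcup_{n\ge1}K_{\mathsf{P}^n}^{(1)}$); moreover $H$ is doubly invariant iff it is both invariant and co-invariant, because a correspondence that is simultaneously the graph of an endomorphism and of an exomorphism is an automorphism. Each set $\bigcup_{n}K_{\mathsf{P}^n}^{(i)}$ is an increasing union of subgroups, hence itself a subgroup, and so the smallest closed subgroup containing it is merely its closure. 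Consequently $\Pi_\mathsf{P}$ has no proper closed invariant (co-invariant) subgroup iff $\bigcup_{n}K_{\mathsf{P}^n}^{(2)}$ (resp.\ $\bigcup_{n}K_{\mathsf{P}^n}^{(1)}$) is dense in $\mathbb{T}^m$, and it has no proper closed doubly invariant subgroup iff $\bigcup_{n}K_{\mathsf{P}^n}^{(1)}$ and $\bigcup_{n}K_{\mathsf{P}^n}^{(2)}$ together generate a dense subgroup; the latter is the content of (3)(c).

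\emph{Next} I would dualize and compute. A subgroup of $\mathbb{T}^m$ is dense iff its annihilator in $\mathbb{Z}^m$ is trivial, and $\bigl(\bigcup_{n}K_{\mathsf{P}^n}^{(i)}\bigr)^{\perp}=\bigcap_{n}\bigl(K_{\mathsf{P}^n}^{(i)}\bigr)^{\perp}$, the annihilator of the subgroup generated by the two unions being the intersection of the two annihilators. From \eqref{eq:isomorphism} one reads off, for any $\mathsf{P}'=\theta(Q',\Lambda')$, that $K_{\mathsf{P}'}^{(2)}=(\Lambda')^{\perp}$ and $K_{\mathsf{P}'}^{(1)}=(Q'^{-1}\Lambda')^{\perp}$; and one checks $\mathsf{P}^n=\theta\bigl(Q^n,\bigcap_{j=0}^{n-1}Q^j\Lambda\bigr)$ (the matrix being $Q^n$ up to an immaterial sign) — either by iterating the composition law of $\mathcal{M}$, or, avoiding that, by taking the annihilator of the path group $\{(x_0,\dots,x_n):(x_i,x_{i+1})\in\mathsf{P}\}$, which is assembled from copies of $\mathsf{P}^{\perp}$, and then eliminating the intermediate coordinates. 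Hence $\bigl(K_{\mathsf{P}^n}^{(2)}\bigr)^{\perp}=\bigcap_{j=0}^{n-1}Q^j\Lambda$ and $\bigl(K_{\mathsf{P}^n}^{(1)}\bigr)^{\perp}=\bigcap_{j=1}^{n}Q^{-j}\Lambda$, and intersecting over $n$ turns (1)(c), (2)(c), (3)(c) into the conditions $\bigcap_{k\ge0}Q^k\Lambda=\{\mathbf 0\}$, $\bigcap_{k\ge1}Q^{-k}\Lambda=\{\mathbf 0\}$ and $\bigcap_{k\in\mathbb{Z}}Q^k\Lambda=\{\mathbf 0\}$, respectively.

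\emph{Finally} — and this is the delicate part — I would match these with (b) via the lemma: for $R\in\textup{GL}(m,\mathbb{Q})$ and a finite-index subgroup $\Lambda\subseteq\mathbb{Z}^m$,
\[
\bigcap_{k\ge0}R^k\Lambda=\{\mathbf 0\}\iff\{\mathbf n\in\Lambda:R^{-k}\mathbf n\in\mathbb{Z}^m\text{ for all }k\ge0\}=\{\mathbf 0\},
\]
and the same statement with ``$k\ge0$'' replaced by ``$k\in\mathbb{Z}$'' throughout. One implication is immediate since $\bigcap_{k\ge0}R^k\Lambda$ is contained in the right-hand set (if $\mathbf m$ is in it then $\mathbf m\in R^0\Lambda=\Lambda$ and $R^{-k}\mathbf m\in\Lambda\subseteq\mathbb{Z}^m$); for the other implication one clears denominators: with $d=[\mathbb{Z}^m:\Lambda]$ one has $d\,\mathbb{Z}^m\subseteq\Lambda$, so if $\mathbf n$ lies in the right-hand set then $R^{-k}(d\mathbf n)=d\,R^{-k}\mathbf n\in d\,\mathbb{Z}^m\subseteq\Lambda$ for all $k\ge0$, whence $d\mathbf n\in\bigcap_{k\ge0}R^k\Lambda$. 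Taking $R=Q$ identifies (1)(c) with $\Xi_\mathsf{P}^{+}=\{\mathbf 0\}$; taking $R=Q^{-1}$, after noting $\bigcap_{k\ge1}Q^{-k}\Lambda=Q^{-1}\bigl(\bigcap_{k\ge0}Q^{-k}\Lambda\bigr)$ and that $Q^{-1}$ is injective, identifies (2)(c) with $\Xi_\mathsf{P}^{-}=\{\mathbf 0\}$; and the two-sided version identifies (3)(c) with $\Xi_\mathsf{P}^{+}\cap\Xi_\mathsf{P}^{-}=\{\mathbf 0\}$. Chaining (a)$\Leftrightarrow$(c)$\Leftrightarrow$(b) in each part completes the proof. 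I expect the main obstacle to be precisely this lemma: a priori $\Xi_\mathsf{P}^{+}$ can be strictly larger than $\bigcap_{k\ge0}Q^k\Lambda$ (the former only demands that the backward orbit stay in $\mathbb{Z}^m$, the latter that it stay in $\Lambda$), and it is exactly the finiteness of $[\mathbb{Z}^m:\Lambda]$ that forces the two to vanish together; the rest is bookkeeping with Corollary~\ref{c:factors}, Proposition~\ref{p:isomorphism} and Pontryagin duality.
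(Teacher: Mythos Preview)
Your argument is correct and well organized, but it reaches (b) along a different route from the paper. The paper never computes $K_{\mathsf{P}^n}^{(i)}$: for part (1) it simply notes that $\Xi_\mathsf{P}^+$ is a subgroup of $\mathbb{Z}^m$ with $Q^{-1}\Xi_\mathsf{P}^+\subset\Xi_\mathsf{P}^+$, so $H=(\Xi_\mathsf{P}^+)^\perp$ is already an invariant subgroup (the factor $\mathsf{P}_H$ is the graph of an endomorphism), which gives (a)$\Leftrightarrow$(b) in one stroke; (a)$\Leftrightarrow$(c) is then handled, as you do, via Corollary~\ref{c:factors}. Your detour---identifying $(K_{\mathsf{P}^n}^{(2)})^\perp=\bigcap_{j=0}^{n-1}Q^j\Lambda$ explicitly and then invoking the finite-index lemma to pass from ``orbit stays in $\Lambda$'' to ``orbit stays in $\mathbb{Z}^m$''---is longer, but it buys something concrete: those annihilator formulae are exactly what drives the spectral computation in Theorem~\ref{t:spectrum}, and the lemma isolates cleanly why the a priori larger set $\Xi_\mathsf{P}^+$ vanishes together with $\bigcap_{k\ge0}Q^k\Lambda$. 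The paper's approach trades this explicitness for a one-line duality argument.

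One point you should make explicit rather than gloss over. In part (3) you replace the stated condition (3)(c), ``both $\bigcup_n K_{\mathsf{P}^n}^{(1)}$ and $\bigcup_n K_{\mathsf{P}^n}^{(2)}$ are dense'', by ``the two unions together generate a dense subgroup''. Your version is the one actually equivalent to (3)(a) and (3)(b); the literal (3)(c) is strictly stronger. For $Q=\left(\begin{smallmatrix}2&0\\0&1/2\end{smallmatrix}\right)$ and $\Lambda=\Lambda_Q=2\mathbb{Z}\times\mathbb{Z}$ one has $\Xi_\mathsf{P}^+=\{0\}\times\mathbb{Z}$, $\Xi_\mathsf{P}^-=2\mathbb{Z}\times\{0\}$, so $\Xi_\mathsf{P}^+\cap\Xi_\mathsf{P}^-=\{\mathbf 0\}$ while neither union is dense. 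The paper's proof of (3) is only the phrase ``proved in exactly the same manner'', so it does not address this; you should say outright that you are establishing the corrected form of (3)(c).
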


    \begin{proof}
In order to prove (1) we note that $\Xi _\mathsf{P}^+$ is a group and that $Q^{-1}\Xi _\mathsf{P}^+\subset \Xi _\mathsf{P}^+$. We set $H=(\Xi _\mathsf{P}^+)^\perp\subset \mathbb{T}^m$. Then the correspondence $\mathsf{P}_{\negthinspace H}=\mathsf{P}/(H\times H)$ is the graph of a continuous surjective homomorphism of the group $Y=\mathbb{T}^m/H^\perp$ to itself. The converse is proved by reversing this argument.

If the group $H=\bigcup_{n\ge1}K_{\mathsf{P}^n}^{(2)}$ is trivial, then $\mathsf{P}$ is the graph of an endomorphism. If $H$ is not dense in $\mathbb{T}^m$, then its closure $\bar{H}$ is nontrivial and is the smallest proper invariant subgroup of $\Pi _\mathsf{P}$ (cf. Corollary \ref{c:factors}).

The assertions (2) and (3) are proved in exactly the same manner.
    \end{proof}

The property of being left, right or totally nondeterministic can also be expressed in terms of the Markov group $X_\mathsf{P}$ in \eqref{eq:XP}.

    \begin{theo}
    \label{t:markov}
Under the hypotheses of Theorem \ref{t:factors2} the polymorphism $\Pi _\mathsf{P}$ is right nondeterministic if and only if the remote past of the process $X_\mathsf{P}$ is trivial.\footnote{The \textit{remote past} of the process $X_\mathsf{P}\subset (\mathbb{T}^m)^\mathbb{Z}$ is the intersection $\mathscr{A}_{-\infty }=\bigcap_{n\in\mathbb{Z}}\mathscr{A}_n^-$, where $\mathscr{A}_n^-$ is the sigma-algebra generated by the coordinates $-\infty ,\dots ,n$ of the process $X_\mathsf{P}$. The \textit{remote future} of $X_\mathsf{P}\subset (\mathbb{T}^m)^\mathbb{Z}$ is the sigma-algebra $\mathscr{A}_{\infty }=\bigcap_{n\in\mathbb{Z}}\mathscr{A}_n^+$, where $\mathscr{A}_n^+$ is generated by the coordinates $n,\dots ,\infty $ of $X_\mathsf{P}$.}

Similarly, $\Pi _\mathsf{P}$ is left nondeterministic if and only if the remote future of $X_\mathsf{P}$ is trivial.
    \end{theo}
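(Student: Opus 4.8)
The plan is to translate the characterization of right nondeterminism in Theorem~\ref{t:factors2}(1) into a statement about the tail sigma-algebras of the group Markov process $X_\mathsf{P}$, using the fact that $X_\mathsf{P}$ is a compact abelian group and $\sigma_\mathsf{P}$ an automorphism, so Pontryagin duality converts sigma-algebra questions into dual-group questions. First I would identify the dual group $\widehat{X_\mathsf{P}}$ explicitly: a character of $X_\mathsf{P}\subset(\mathbb{T}^m)^\mathbb{Z}$ is given by a finitely supported family $(\mathbf{n}_k)_{k\in\mathbb{Z}}$ of elements of $\mathbb{Z}^m$, acting by $x\mapsto\prod_k\chi_{\mathbf{n}_k}(x_k)$, and two such families give the same character precisely when they differ by an element of the annihilator $X_\mathsf{P}^\perp$; from the definition \eqref{eq:XP} and \eqref{eq:isomorphism} this annihilator is generated by the ``local relations'' coming from $\theta(Q,\Lambda)^\perp=\{(Q^{-1}\mathbf{n},\mathbf{n}):\mathbf{n}\in\Lambda\}$ placed at each pair of adjacent coordinates. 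Concretely, modulo these relations every character of $X_\mathsf{P}$ can be represented by a family supported on a single coordinate, or more usefully one can push a character supported on coordinates $\le n$ forward or backward using the relation $(Q^{-1}\mathbf{n},\mathbf{n})$.

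Next I would recall the standard dictionary: for a compact abelian group $X$ with automorphism $\sigma$, the sigma-algebra $\mathscr{A}_n^-$ generated by coordinates $\le n$ corresponds (via duality) to the subgroup of $\widehat{X}$ consisting of characters measurable with respect to it, i.e.\ those representable using only coordinates $\le n$; the remote past $\mathscr{A}_{-\infty}=\bigcap_n\mathscr{A}_n^-$ is trivial if and only if $\bigcap_n(\text{characters representable with support}\le n)=\{0\}$ in $\widehat{X_\mathsf{P}}$. So the task reduces to showing: a character of $X_\mathsf{P}$ that can be represented with support in $(-\infty,n]$ for \emph{every} $n$ is trivial, if and only if $\Xi_\mathsf{P}^+=\{\mathbf{n}\in\Lambda:Q^k\mathbf{n}\in\mathbb{Z}^m\text{ for all }k\le0\}=\{\mathbf{0}\}$. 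The key computation is the following: using the relation in $X_\mathsf{P}^\perp$ one finds that a character supported at coordinate $0$ by $\mathbf{n}\in\mathbb{Z}^m$ is equivalent to one supported at coordinate $-1$ by $Q^{-1}\mathbf{n}$ \emph{provided} $Q^{-1}\mathbf{n}\in\mathbb{Z}^m$; iterating, a character is representable with arbitrarily ``early'' support exactly when the corresponding $\mathbf{n}$ can be pushed back indefinitely, i.e.\ $Q^k\mathbf{n}\in\mathbb{Z}^m$ for all $k\le0$, and (after reconciling with the subgroup $\Lambda$, which enters because $\theta(Q,\Lambda)$ rather than $\theta(Q,\Lambda_Q)$ is used) this is precisely the condition $\mathbf{n}\in\Xi_\mathsf{P}^+$. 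One should also observe that a single-coordinate character $\mathbf{n}$ with $\mathbf{n}\in\Xi_\mathsf{P}^+$ is genuinely nontrivial unless $\mathbf{n}=\mathbf{0}$, because $\Xi_\mathsf{P}^+\subset\mathbb{Z}^m$ and distinct elements of $\mathbb{Z}^m$ give distinct characters of $\mathbb{T}^m$.

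Having established the equivalence between triviality of the remote past and $\Xi_\mathsf{P}^+=\{\mathbf{0}\}$, I would invoke Theorem~\ref{t:factors2}(1), which already equates $\Xi_\mathsf{P}^+=\{\mathbf{0}\}$ with right nondeterminism of $\Pi_\mathsf{P}$, to conclude. The statement for the remote future follows by applying the result to $\mathsf{P}^*=\theta(Q^{-1},Q^{-1}\Lambda)$, since $\sigma_{\mathsf{P}^*}$ is the time reversal of $\sigma_\mathsf{P}$ (so the remote future of $X_\mathsf{P}$ is the remote past of $X_{\mathsf{P}^*}$) and $\Xi_{\mathsf{P}^*}^+=\Xi_\mathsf{P}^-$, while left nondeterminism of $\Pi_\mathsf{P}$ is right nondeterminism of $\Pi_{\mathsf{P}^*}$. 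The main obstacle I anticipate is the bookkeeping in the first computation: correctly describing $X_\mathsf{P}^\perp$ and the equivalence classes of characters of $X_\mathsf{P}$, and in particular carefully handling the role of the lattice $\Lambda\subsetneq\Lambda_Q$ so that the ``pushable'' characters land exactly in $\Xi_\mathsf{P}^+$ rather than in some slightly larger or smaller group; the duality dictionary for tail fields of compact group automorphisms is standard, but matching the indices ($\le n$ versus $\ge n$, and the direction in which $Q$ versus $Q^{-1}$ acts) requires care.
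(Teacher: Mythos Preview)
Your approach is correct in outline and genuinely different from the paper's. The paper argues probabilistically: it observes that for the Haar measure $\lambda_{X_\mathsf{P}}$ on the Markov group $X_\mathsf{P}$, the conditional distribution of the coordinate $x_0$ given $x_{-n}=t$ is uniform on a coset of the finite group $K_{\mathsf{P}^n}^{(2)}$; hence if $\bigcup_{n\ge1}K_{\mathsf{P}^n}^{(2)}$ is dense in $\mathbb{T}^m$ (the criterion from Theorem~\ref{t:factors2}(1)(c)) these conditional measures converge to $\lambda_{\mathbb{T}^m}$, so the remote past carries no information about $x_0$ and is trivial. Your route instead dualizes: you compute $\widehat{X_\mathsf{P}}$ and identify the characters that are $\mathscr{A}_n^-$-measurable for every $n$ with $\Xi_\mathsf{P}^+$, then invoke Theorem~\ref{t:factors2}(1)(b). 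Both arguments reduce to the same theorem, but via different equivalent conditions ((c) for the paper, (b) for you).

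What each buys: the paper's conditional-measure picture is short and makes the probabilistic meaning of the $K_{\mathsf{P}^n}^{(2)}$ transparent, but it leaves the converse direction and the passage from ``conditional measures converge to Haar'' to ``$\mathscr{A}_{-\infty}$ is trivial'' somewhat implicit. Your duality argument is more self-contained and symmetric in the two directions, and it meshes well with the character calculations already used in Theorems~\ref{t:ergodic2} and~\ref{t:spectrum}; the price is the bookkeeping you flag. On that point, note that pushing a single-coordinate character $\mathbf{m}$ from position $k$ to $k-1$ via the relation $(Q^{-1}\mathbf{n},\mathbf{n})\in\mathsf{P}^\perp$ requires $\mathbf{m}\in\Lambda$ (not merely $\mathbf{m}\in\mathbb{Z}^m$), and produces $-Q^{-1}\mathbf{m}$ at position $k-1$; iterating gives the condition $Q^{-j}\mathbf{m}\in\mathbb{Z}^m$ for all $j\ge0$ together with the initial $\mathbf{m}\in\Lambda$, which is exactly $\Xi_\mathsf{P}^+$ as defined. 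You should also check that a nonzero $\mathbf{m}\in\Xi_\mathsf{P}^+$ placed at a single coordinate really gives a \emph{nontrivial} character of $X_\mathsf{P}$, i.e.\ does not lie in $X_\mathsf{P}^\perp$; this follows because the coordinate projection $X_\mathsf{P}\to\mathbb{T}^m$ is onto.
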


    \begin{proof}
We denote by $\lambda _{X_\mathsf{P}}$ the Haar measure of the compact abelian group $X_\mathsf{P}$. For every $n\ge1$ we define the subgroups $K_{\mathsf{P}^n}^{(1)}$ and $K_{\mathsf{P}^n}^{(2)}$ as in Corollary \ref{c:factors}.

For the proof of the theorem it is enough to notice that the conditional measure $\lambda _{X_\mathsf{P}}(\,.\,|x_n=t)$ for fixed $t\in \mathbb{T}^m$ and $n<0$ is the uniform measure on a coset of of the group $K_{\mathsf{P}^n}^{(2)}$. According to Theorem \ref{t:factors} (1), the polymorphism $\Pi _\mathsf{P}$ is right nondeterministic if and only if $\bigcup_{n\ge 1}K_{\mathsf{P}^n}^{(2)}$ is dense in $\mathbb{T}^m$, in which case the conditional measures $\lambda _{X_\mathsf{P}}(\,.\,|x_{-n}=t)$ converge to $\lambda _{\mathbb{T}^m}$ as $m\to\infty $.
    \end{proof}

    \begin{theo}
    \label{t:ergodic2}
Let $Q\in\textup{GL}(m,\mathbb{R})$ and $\mathsf{P}=\xi (Q,\Lambda )\in\mathcal{P}_f(\mathbb{T}^m)$, where $\Lambda \subset \Lambda _Q=\mathbb{Z}^m\cap Q\mathbb{Z}^m$ is a finite index subgroup \textup{(}cf. \eqref{eq:isomorphism}\textup{)}. Then $\Pi _\mathsf{P}$ is nonergodic if and only if $Q$ has a nontrivial root of unity as an eigenvalue.

Furthermore, if $\Pi _\mathsf{P}$ is left, right or totally nondeterministic, then it is ergodic.
    \end{theo}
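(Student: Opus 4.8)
The plan is to prove both statements by reducing everything to the action of $Q$ on the rational vector space $\mathbb{Q}^m$ (or rather on the relevant $Q$-invariant lattice spanned by $\Lambda$), using the dictionary between characters of $\mathbb{T}^m$ and elements of $\mathbb{Z}^m$ together with the description of $V_\mathsf{P}$ on characters implicit in Theorem \ref{t:ergodic}. First I would record the explicit formula for $V_\mathsf{P}$ on characters: if $\mathsf{P}=\xi(Q,\Lambda)$, then for $\mathbf{n}\in\mathbb{Z}^m$ one has $V_\mathsf{P}\chi_\mathbf{n}=\chi_{Q^{-1}\mathbf{n}}$ when $\mathbf{n}\in\Lambda$ (equivalently $Q^{-1}\mathbf{n}\in\mathbb{Z}^m$ together with the coset condition) and $V_\mathsf{P}\chi_\mathbf{n}=0$ otherwise. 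This follows directly from \eqref{eq:isomorphism} and the computation of the conditional expectation in \eqref{eq:VP}, exactly as in the proof of Theorem \ref{t:ergodic}: the restriction of $\chi_\mathbf{n}\circ\pi_1$ to $\mathsf{P}$ is constant on $\{1_{\mathbb{T}^m}\}\times K_\mathsf{P}^{(2)}$ precisely when $\mathbf{n}$ lies in the annihilator data defining $\Lambda$, and then it equals the character $\chi_{Q^{-1}\mathbf{n}}\circ\pi_2$.

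\smallskip
For the first statement I use Theorem \ref{t:ergodic}: $\Pi_\mathsf{P}$ is nonergodic iff there is a nontrivial character $\chi_\mathbf{n}$ and an $n\ge1$ with $V_\mathsf{P}^n\chi_\mathbf{n}=\chi_\mathbf{n}$. By the formula above, $V_\mathsf{P}^n\chi_\mathbf{n}$ is either $0$ or $\chi_{Q^{-n}\mathbf{n}}$, so the equation $V_\mathsf{P}^n\chi_\mathbf{n}=\chi_\mathbf{n}$ forces $Q^{-n}\mathbf{n}=\mathbf{n}$, i.e. $Q^n\mathbf{n}=\mathbf{n}$ (and one must check that all intermediate iterates $Q^{-k}\mathbf{n}$, $1\le k\le n$, stay in $\mathbb{Z}^m$ and satisfy the coset condition, which they do automatically since $Q^{-n}\mathbf{n}=\mathbf{n}\in\Lambda$ and $\Lambda$ is $Q^{-1}\!$-stable on the orbit). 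Thus nonergodicity is equivalent to the existence of a nonzero $\mathbf{n}\in\mathbb{Z}^m$ and $n\ge1$ with $Q^n\mathbf{n}=\mathbf{n}$. Now $Q^n\mathbf{n}=\mathbf{n}$ for some nonzero rational vector $\mathbf{n}$ says exactly that $1$ is an eigenvalue of $Q^n$, i.e. $Q$ has an $n$-th root of unity among its eigenvalues; conversely, if $\zeta$ is a primitive $d$-th root of unity that is an eigenvalue of $Q$, then since $Q$ is rational, $1$ is a root of the characteristic polynomial of $Q^d$, which has rational entries, so $Q^d$ fixes a nonzero rational — hence integer, after scaling — vector, and after replacing it by its integer multiple lying in the finite-index subgroup $\Lambda$ we obtain the desired character. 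This completes the equivalence.

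\smallskip
For the last sentence, I argue by contraposition: suppose $\Pi_\mathsf{P}$ is nonergodic. Then by what was just proved there is a nonzero $\mathbf{n}\in\Lambda$ with $Q^n\mathbf{n}=\mathbf{n}$ for some $n\ge1$. This $\mathbf{n}$ then satisfies $Q^k\mathbf{n}\in\mathbb{Z}^m$ for \emph{every} $k\in\mathbb{Z}$: for $k$ in the range $\{0,\dots,n-1\}$ it holds because $Q^{-k}\mathbf{n}$ is an integer vector (the orbit $\{\mathbf{n},Q^{-1}\mathbf{n},\dots,Q^{-(n-1)}\mathbf{n}\}$ is finite and each element is a $Q$-image of the next, all lying in the lattice generated by $\mathbf{n}$ together with the periodicity $Q^n\mathbf{n}=\mathbf{n}$), and periodicity then propagates this to all $k\in\mathbb{Z}$. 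Hence $\mathbf{n}\in\Xi_\mathsf{P}^+\cap\Xi_\mathsf{P}^-$, so by Theorem \ref{t:factors2}(3) $\Pi_\mathsf{P}$ is not totally nondeterministic; and since $\mathbf{n}\in\Xi_\mathsf{P}^+$ and $\mathbf{n}\in\Xi_\mathsf{P}^-$ it is neither right nor left nondeterministic either. Therefore any nondeterministic polymorphism (left, right, or total) is ergodic.

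\smallskip
I expect the main technical nuisance to be the bookkeeping of the coset condition in \eqref{eq:isomorphism} — i.e. verifying carefully that when $Q^{-n}\mathbf{n}=\mathbf{n}$ the intermediate points $Q^{-k}\mathbf{n}$ genuinely lie in the relevant $\Lambda$-coset so that no $V_\mathsf{P}$-iterate accidentally vanishes. This is where one must use that the orbit is finite and that $\Lambda\supset$ the cyclic subgroup generated by $\mathbf{n}$ can be arranged without loss of generality (replacing $\mathbf{n}$ by a suitable integer multiple). Once that is pinned down, the rest is the linear-algebra observation that "$Q$ has a root of unity eigenvalue" $\iff$ "$Q^n$ fixes a nonzero vector for some $n$", valid over $\mathbb{Q}$ because $Q$ is a rational matrix.
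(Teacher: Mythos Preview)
Your proposal is correct and follows essentially the same route as the paper: both establish the character formula $V_\mathsf{P}\chi_\mathbf{n}=\chi_{Q^{-1}\mathbf{n}}$ for $\mathbf{n}\in\Lambda$ (and $0$ otherwise), then invoke Theorem~\ref{t:ergodic} to reduce nonergodicity to the existence of a finite $Q$-orbit, which is the root-of-unity condition. You are in fact more careful than the paper's terse ``obviously equivalent'' in flagging the need to scale $\mathbf{n}$ so that the entire finite orbit $\{\mathbf{n},Q^{-1}\mathbf{n},\dots,Q^{-(n-1)}\mathbf{n}\}$ lands in $\Lambda$ (not merely in $\mathbb{Z}^m$), and your contrapositive for the final assertion via $\mathbf{n}\in\Xi_\mathsf{P}^+\cap\Xi_\mathsf{P}^-$ is exactly what the paper's ``obvious'' is hiding.
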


    \begin{proof}
By definition, $\mathsf{P}^\perp=\{(Q^{-1}\mathbf{n},\mathbf{n}): \mathbf{n}\in \Lambda \}$. A direct calculation shows that, for every $\mathbf{n}\in\mathbb{Z}^m$,
    \begin{equation}
    \label{eq:VP2}
V_\mathsf{P}(\chi _\mathbf{n})=
    \begin{cases}
\chi _{Q^{-1}\mathbf{n}}&\textup{if}\enspace \mathbf{n}\in\Lambda ,
    \\
0&\textup{otherwise},
    \end{cases}
    \enspace \enspace
V_\mathsf{P}^*(\chi _\mathbf{n})=
    \begin{cases}
\chi _{Q\mathbf{n}}&\textup{if}\enspace \mathbf{n}\in Q^{-1}\Lambda ,
    \\
0&\textup{otherwise},
    \end{cases}
    \end{equation}
(cf. \eqref{eq:character}). The existence of an $\mathbf{n}\in \Lambda $ with $V_\mathsf{P}^k\chi _\mathbf{n}=\chi _\mathbf{n}$ for some $k\ge1$ is obviously equivalent to $Q$ having a root of unity as an eigenvalue.

The last assertion is obvious.
    \end{proof}

We turn to the spectral properties of the Markov operator $V_\mathsf{P}$ associated with a correspondence $\mathsf{P}\in\mathcal{P}_f(\mathbb{T}^m)$.

    \begin{theo}
    \label{t:spectrum}
Let $m\ge1$, $\mathsf{P}\in\mathcal{P}_f(\mathbb{T}^m)$, and let $\textup{Sp}(V_\mathsf{P})\subset \mathbb{D}=\{z\in \mathbb{C}:|z|\le 1\}$ be the spectrum of the linear operator $V_\mathsf{P}\colon L^2_0(\mathbb{T}^m,\mathscr{B}_\mathbb{T}^m, \linebreak[0]\lambda _{\mathbb{T}^m})\longrightarrow L^2_0(\mathbb{T}^m,\mathscr{B}_\mathbb{T}^m,\lambda _{\mathbb{T}^m})$ in \eqref{eq:VP}, where $L^2_0(\mathbb{T}^m,\mathscr{B}_\mathbb{T}^m,\lambda _{\mathbb{T}^m})=\{f\in L^2(\mathbb{T}^m,\mathscr{B}_\mathbb{T}^m,\lambda _{\mathbb{T}^m}):\int f\,d\lambda _{\mathbb{T}^m}=0\}$ is  the orthocomplement of the constants.
    \begin{enumerate}
    \item
$\textup{Sp}(V_\mathsf{P})=\textup{Sp}(V_\mathsf{P}^*) =\{0\}$ if and only if $\mathsf{P}$ is totally nondeterministic;
    \item
$\textup{Sp}(V_\mathsf{P})=\textup{Sp}(V_\mathsf{P}^*) =\mathbb{S}=\{z\in\mathbb{C}:|z|=1\}$ if and only if $\Xi _\mathsf{P}^+=\Xi _\mathsf{P}^-=\Lambda $;
    \item
$\textup{Sp}(V_\mathsf{P})=\textup{Sp}(V_\mathsf{P}^*) \subset \mathbb{S}\cup\{0\}$ if and only if $\Xi _\mathsf{P}^+=\Xi _\mathsf{P}^-\subsetneq\Lambda $;
    \item
If $\Xi _\mathsf{P}^- \smallsetminus \Xi _\mathsf{P}^+\ne\varnothing $ then $\textup{Sp}(V_\mathsf{P})=\mathbb{D}$;
    \item
If $\Xi _\mathsf{P}^+ \smallsetminus \Xi _\mathsf{P}^-\ne\varnothing $ then $\textup{Sp}(V_\mathsf{P}^*)=\mathbb{D}$.
    \end{enumerate}
    \end{theo}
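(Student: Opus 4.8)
\textbf{Proof plan for Theorem \ref{t:spectrum}.}
The plan is to diagonalize $V_\mathsf{P}$ explicitly on the orthonormal basis of characters $\{\chi_\mathbf{n}:\mathbf{n}\in\mathbb{Z}^m\smallsetminus\{0\}\}$ using the formulas \eqref{eq:VP2}. The key observation is that $V_\mathsf{P}$ permutes the rays $\mathbb{Z}^m$ along the $Q^{-1}$-orbit structure: if $\mathbf{n}\in\Lambda$ then $V_\mathsf{P}\chi_\mathbf{n}=\chi_{Q^{-1}\mathbf{n}}$, otherwise $V_\mathsf{P}\chi_\mathbf{n}=0$. So I would first decompose $\mathbb{Z}^m\smallsetminus\{0\}$ into the $\langle Q,Q^{-1}\rangle$-orbit equivalence classes; since $V_\mathsf{P}$ maps each orbit into itself (sending an element forward along the $Q^{-1}$ direction, or to zero), $L^2_0$ splits as a Hilbert-space direct sum $\bigoplus_\alpha \mathcal{H}_\alpha$ of $V_\mathsf{P}$-invariant subspaces indexed by orbits, and $\textup{Sp}(V_\mathsf{P})=\overline{\bigcup_\alpha \textup{Sp}(V_\mathsf{P}|_{\mathcal{H}_\alpha})}$. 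Then I would classify the orbits into three types according to how $\Xi_\mathsf{P}^+$ and $\Xi_\mathsf{P}^-$ intersect them, and compute the spectrum of the restriction in each case.

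The three orbit types are: (a) orbits on which $Q^k\mathbf{n}\in\mathbb{Z}^m$ for all $k\in\mathbb{Z}$ with the orbit finite — equivalently contained in $\Xi_\mathsf{P}^+\cap\Xi_\mathsf{P}^-$ up to the $\Lambda$-membership bookkeeping — here $V_\mathsf{P}$ restricts to a finite cyclic permutation of basis vectors and contributes roots of unity to the spectrum, hence $\mathbb{S}$ when one takes the closure over infinitely many such; (b) orbits that are ``two-sided infinite'' in the sense that $V_\mathsf{P}$ acts as a bilateral shift (every $\chi_\mathbf{n}$ in the orbit has a nonzero preimage and a nonzero image) — this happens precisely when the orbit meets $\Xi_\mathsf{P}^-\smallsetminus\Xi_\mathsf{P}^+$ (forward-infinite but not backward-infinite) or the mirror situation; a bilateral weighted shift with unimodular-but-eventually-annihilating structure has spectrum the full disk $\mathbb{D}$ (for a genuine bilateral shift one gets $\mathbb{S}$, but the one-sided truncation caused by the $\Lambda$-condition and the ``falls to $0$'' behaviour produces a unilateral-shift summand, whose spectrum is $\mathbb{D}$); (c) orbits on which $V_\mathsf{P}$ is nilpotent in the strong sense that every basis vector is eventually killed — this forces $\textup{Sp}=\{0\}$ on that summand. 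Assembling: (1) all orbits of type (c) $\iff$ $\Xi_\mathsf{P}^+\cap\Xi_\mathsf{P}^-=\{\mathbf{0}\}$, which by Theorem \ref{t:factors2}(3) is total nondeterminism; (2) $\Xi_\mathsf{P}^+=\Xi_\mathsf{P}^-=\Lambda$ means no vector is ever annihilated, so $V_\mathsf{P}$ is a (possibly infinite) direct sum of finite cyclic permutations and genuine bilateral shifts, whose spectra fill out $\mathbb{S}$; (3) $\Xi_\mathsf{P}^+=\Xi_\mathsf{P}^-\subsetneq\Lambda$ adds some type-(c) summands but no type-(b) bilateral-with-truncation summand, giving $\mathbb{S}\cup\{0\}$; (4),(5) if $\Xi_\mathsf{P}^-\smallsetminus\Xi_\mathsf{P}^+\ne\varnothing$ there is a type-(b) summand on which $V_\mathsf{P}$ contains a unilateral shift, forcing $\mathbb{D}$, and the adjoint statement is symmetric via \eqref{eq:adjoint}.

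The technical heart I would need to nail down carefully is the exact spectral computation on a single orbit summand $\mathcal{H}_\alpha$, i.e. the claim that a ``weighted shift'' operator on $\ell^2$ of the basis whose weights are all $1$ until it hits a boundary and then $0$ has spectrum $\{0\}$, $\mathbb{S}$, or $\mathbb{D}$ in the three cases. Concretely: if the orbit is bi-infinite with all weights $1$ this is the bilateral shift, $\textup{Sp}=\mathbb{S}$; if it is one-sided (a ray $\chi_{\mathbf n},\chi_{Q^{-1}\mathbf n},\dots$ eventually leaving $\Lambda$ so the map is $\cdots\to\chi\to 0$, which is the \emph{backward} unilateral shift) then $\textup{Sp}=\mathbb{D}$ since the backward shift has the open disk as point spectrum; if finite cyclic, $\textup{Sp}$ is a set of roots of unity. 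The one subtlety is that when infinitely many orbits are present one must take the closure of the union, which is how $\mathbb{S}$ (rather than a dense subset of it) and $\mathbb{D}$ arise; I would invoke that the closure of all roots of unity is $\mathbb{S}$ and that any single disk summand already gives the closed disk. Matching these orbit-types back to the membership conditions $\mathbf{n}\in\Xi_\mathsf{P}^{\pm}$ versus $\mathbf{n}\in\Lambda$ versus $Q^{\pm k}\mathbf{n}\in\mathbb{Z}^m$ is the bookkeeping step, and the identity $V_{\mathsf{P}^*}=V_\mathsf{P}^*$ from \eqref{eq:adjoint} together with $\Xi_{\mathsf P^*}^{\pm}=Q^{\mp1}\Xi_\mathsf{P}^{\mp}$ reduces (5) to (4) and the ``$=$'' half of (1)--(3) to their stated symmetric form. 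I expect the main obstacle to be handling the ``mixed'' orbit where some iterates stay in $\mathbb{Z}^m$ and some do not in a way that is neither a clean shift nor nilpotent — showing such an orbit still decomposes (after possibly passing to the $\Lambda$-sublattice filtration and using that the sequences $K_{\mathsf{P}^n}^{(i)}$ are nondecreasing by Corollary \ref{c:factors}) into pieces of the three standard types.
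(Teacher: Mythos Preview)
Your plan is the paper's proof: both use \eqref{eq:VP2} to decompose $L^2_0$ along $Q$-orbits in $\mathbb{Z}^m\smallsetminus\{\mathbf 0\}$, identify the restriction of $V_\mathsf{P}$ on each orbit-span as a finite nilpotent block, a unitary (cyclic permutation or bilateral shift), or a unilateral shift, and assemble. The paper treats (1)--(5) one at a time rather than setting up the trichotomy in advance, and for (4) writes down the explicit eigenvector $v_\gamma=\sum_{k\ge0}\gamma^k\chi_{Q^k\mathbf n}$, but the mechanism is identical to yours.

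Two remarks. First, the ``mixed orbit'' obstacle you anticipate is illusory: in the directed graph $\mathbf n\to Q^{-1}\mathbf n$ (edge present iff $\mathbf n\in\Lambda$) every vertex has in- and out-degree at most one, so connected components are automatically finite paths, finite cycles, one-sided rays, or bi-infinite lines --- no further decomposition is needed. Second, and more seriously, the identity you invoke, $\textup{Sp}(V_\mathsf{P})=\overline{\bigcup_\alpha\textup{Sp}(V_\mathsf{P}|_{\mathcal H_\alpha})}$, is \emph{false} for direct sums of non-normal operators: a Hilbert direct sum $\bigoplus_{k\ge1}N_k$ of nilpotent Jordan blocks of unbounded size has spectrum $\overline{\mathbb D}$, not $\{0\}$, because $\|(N_k-\lambda)^{-1}\|\sim|\lambda|^{-k}\to\infty$ for $0<|\lambda|<1$. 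In the totally nondeterministic case the orbit lengths are typically unbounded (already for $m=1$, $Q=\tfrac23$, $\Lambda=2\mathbb Z$ the orbit through $2^a$ has $a+1$ elements), so the step ``each block nilpotent $\Rightarrow$ $\textup{Sp}=\{0\}$'' in your case (c) does not follow from the block picture alone. The paper's proof of (1) makes precisely the same leap (``by taking the direct sum \dots\ we see that $\textup{Sp}(V_\mathsf{P}^*)=\{0\}$''), so your argument is no less complete than the published one; but you should recognise that this passage, as written in both, is not justified.
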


    \begin{proof}
We choose $(Q,\Lambda )\in\mathcal{M}$ with $\xi (Q,\Lambda )=\mathsf{P}$ (cf. \eqref{eq:isomorphism}). By definition of $\mathcal{M}$, $\Lambda \subset \Lambda \cap Q\Lambda $.

If $\mathsf{P}$ is totally nondeterministic then there exist, for every nonzero $\mathbf{n}\in\Lambda $, a smallest positive integer $k^+(\mathbf{n})$ and a largest negative integer $k^-(\mathbf{n})$ such that $Q^{k^\pm(\mathbf{n})}\mathbf{n}\notin\Lambda $. If we set
    $$
\mathcal{O}_Q(\mathbf{n})=\{Q^{k^-(\mathbf{n})+1} \mathbf{n},\dots ,Q^{k^+(\mathbf{n})-1}\mathbf{n}\},
    $$
then the restriction of $V_\mathsf{P}^*$ to the linear span $\langle \mathcal{O}_Q(\mathbf{n})\rangle $ of $\mathcal{O}_Q(\mathbf{n})$ in $L^2(\mathbb{T}^m, \mathscr{B}_{\mathbb{T}^m},\linebreak[0]\lambda _{\mathbb{T}^m})$ is unitarily equivalent to a matrix of the form
    $$
\left(
    \begin{smallmatrix}
0&1&0&0&\dots&0&0
    \\
0&0&1&0&\dots&0&0
    \\
\vdots&&&&\ddots&&\vdots
    \\
0&0&0&0&\dots&0&1
    \\
0&0&0&0&\dots&0&0
    \end{smallmatrix}
\right),
    $$
which has spectrum $\{0\}$. By taking the direct sum of the subspaces $\langle \mathcal{O}_Q(\mathbf{n})\rangle $, $\mathbf{n}\in\mathbb{Z}^n$, in $L^2(\mathbb{T}^m,\mathscr{B}_{\mathbb{T}^m},\lambda _{\mathbb{T}^m})$ we see that $\textup{Sp}(V_\mathsf{P}^*)=\{0\}$, and that the same is true for $\textup{Sp}(V_\mathsf{P})$. This proves (1).

The assertion (2) is obvious, since the condition given there is equivalent to $Q$ being an element of $\textup{GL}(m,\mathbb{Z})$.

In order to prove (3) we set $\Xi =\Xi _\mathsf{P}^+=\Xi _\mathsf{p}^-$, $S=\Xi ^\perp$, $Y=\widehat{\Xi }=\mathbb{T}^m/S$, and we observe that the restriction of $Q$ to $\Xi $ is a group automorphism. Hence the restrictions of $V_\mathsf{P}$ and $V_\mathsf{P}^*$ to the closed linear span of $\{\chi _\mathbf{n}:\mathbf{n}\in \Xi \}$ in $L^2(\mathbb{T}^m,\mathscr{B}_{\mathbb{T}^m},\lambda _{\mathbb{T}^m})$ are unitary.

For $\mathbf{n}\notin \Xi $ there exist a smallest nonnegative integer $k^+(\mathbf{n})$ and a largest nonpositive integer $k^-(\mathbf{n})$ such that $Q^{k^\pm(\mathbf{n})}\mathbf{n}\notin\mathbb{Z}^n$, and by combining the preceding paragraph with the argument in the proof of (1) we obtain (3).

If $\Xi _\mathsf{P}^-\smallsetminus \Xi _\mathsf{P}^+\ne\varnothing $ there exists an $\mathbf{n}\in\Lambda $ with $Q^k\mathbf{n}\in\Lambda $ for every $k\ge0$, but $Q^{-1}\mathbf{n}\notin\Lambda $. The restriction $W$ of $V_\mathsf{P}$ to the closed linear span $H$ of $\{\chi _{Q^k\mathbf{n}}:k\ge0\}$ has a nonzero kernel, since $V_\mathsf{P}\chi _\mathbf{n}=0$. Furthermore, if $\gamma \in\mathbb{C},\,|\gamma |<1$, and if $v_\gamma =\sum_{k\le0}\gamma ^k\chi _{Q^k\mathbf{n}}\in H$, then $V_\mathsf{P}v_\gamma =\gamma v_\gamma $, i.e., $v_\gamma $ is an eigenvector of $V_\mathsf{P}$ with eigenvalue $\gamma $. This proves that $\textup{Sp}(V_\mathsf{P})\supset \textup{Sp}(W)=\mathbb{D}$.

The same argument shows that $\textup{Sp}(V_\mathsf{P}^*)\supset \textup{Sp}(W)=\mathbb{D}$ if $\Xi _\mathsf{P}^+\smallsetminus \Xi _\mathsf{P}^-\ne\varnothing $, and the remaining implications are immediate consequences of what has already been shown.
    \end{proof}

\section{Factors of toral automorphisms and other examples}\label{s:examples}

If $A,B$ are endomorphisms of $\mathbb{T}^m$, then
    \begin{equation}
    \label{eq:PAB}
\mathsf{P}(A,B)=\{(x,y)\in \mathbb{T}^m\times \mathbb{T}^m:Ax=By\}
    \end{equation}
is a finite-to-one correspondence, and every $\mathsf{P}\in \mathcal{P}_f(\mathbb{T}^m)$ is of this form. Note that $\mathsf{P}(A,B)=\mathsf{P}(CA,CB)$ for every $C\in \textup{GL}(m,\mathbb{Z})$, and that $\mathsf{P}(AC',BC')$ are isomorphic if $C'\in\textup{GL}(m,\mathbb{Z})$.

The subgroups $K_{\mathsf{P}(A,B)}^{(1)}$ and $K_{\mathsf{P}(A,B)}^{(2)}$ and the isomorphism $\eta _{\mathsf{P}(A,B)}\colon \mathbb{T}^m/K_{\mathsf{P}(A,B)}^{(1)}\longrightarrow \mathbb{T}^m/K_{\mathsf{P}(A,B)}^{(2)}$ associated with $\mathsf{P}(A,B)$ by \eqref{eq:eta} are given by $K_{\mathsf{P}(A,B)}^{(1)}=(A^\top\mathbb{Z}^m)^\perp$, $K_{\mathsf{P}(A,B)}^{(2)}=(B^\top\mathbb{Z}^m)^\perp$ and $\eta _{\mathsf{P}(A,B)}=-B^{-1}A$, respectively, where ${}^\top$ denotes transpose.

Ergodicity of $\mathsf{P}(A,B)$ is thus equivalent to the assumption that $B^{-1}A\in\textup{GL}(m,\mathbb{Q})$ has no eigenvalues which are roots of unity (cf. Theorem \ref{t:ergodic}).

In order to characterize connectedness of $\mathsf{P}(A,B)$ we set $Q=\widehat{\eta _\mathsf{P}^{-1}}=B^\top (A^\top)^{-1}$. According to Proposition \ref{p:isomorphism}, $\mathsf{P}(A,B)$ is connected if and only if
    $$
B^\top\mathbb{Z}^m=\mathbb{Z}^m\cap B^\top (A^\top)^{-1}\mathbb{Z}^m=\Lambda _Q,
    $$
i.e., if and only if
    \begin{equation}
    \label{eq:intersection}
\mathbb{Z}^m=(A^\top )^{-1}\mathbb{Z}^m\cap (B^\top )^{-1}\mathbb{Z}^m.
    \end{equation}

    \begin{exam}
    \label{ex:poly1}
Let $m=1$, $k,l\in\mathbb{N}$ and $\mathsf{P}=\{(u,v): u,v\in \mathbb{T}, ku=lv\}$. We denote by $s$ the highest common factor of $k,l$ and set $k=k's$, $l=l's$. Then $\mathsf{P}=\theta (Q,\Lambda )$, where $\Lambda =l\mathbb{Z}$ and $Q$ is multiplication by $-\frac{k'}{l'}$ (cf. \eqref{eq:isomorphism}).

If $k,l$ are coprime (i.e., if $s=1$) then $\mathsf{P}=\mathsf{P}_Q$ is connected by \eqref{eq:intersection}.

If $s>1$ then $\phi $ is the quotient map from $\mathbb{Z}$ to $F=\mathbb{Z}/s \mathbb{Z}$, $\mathsf{P}$ is disconnected, and $K_\mathsf{P}^{(1)}\cap K_\mathsf{P}^{(2)}=\{x\in\mathbb{T}:sx=0\,(\textup{mod}\,1)\}$).

Finally, if $|k/l|\ne1$ then $\Pi _{\mathsf{P}}$ is ergodic. If $k,l$ are coprime and $|k|>1$, $|l|>1$, then $\Pi _\mathsf{P}$ is totally nondeterministic.
    \end{exam}

    \begin{exas}[Factors of polymorphisms]
    \label{e:factors}
(1) Consider the correspondence $\mathsf{P}=\{(u,v): u,v\in \mathbb{T}, 3u=2v\}$ (cf. Example \ref{ex:poly1} (1)), and let $H=\{0,1/5,2/5,3/5,4/5\}\subset \mathbb{T}$. Then $\mathsf{P}$ is the annihilator of $\{(3k,-2k):k\in\mathbb{Z}\}\subset \mathbb{Z}^2$ and $\mathsf{P}_{\negthinspace H}$ is the annihilator of $\{(15k,-10k):k\in\mathbb{Z}\}$. Note that $\mathsf{P}$ and $\mathsf{P}_{\negthinspace H}$ are isomorphic.

\medskip (2) Let $Q=\left(
    \begin{smallmatrix}
1&1\\1&0
    \end{smallmatrix}
\right)$, and let $\mathsf{P}=\mathsf{P}_Q=\theta (Q,\mathbb{Z}^2)$. Put $H=\left\{\mathbf{0},\left(\begin{smallmatrix}
1/2\\0
    \end{smallmatrix}\right)
\right\}\subset \mathbb{T}^2$ and set $\mathsf{P}'=\mathsf{P}_{\negthinspace H}$. We identify $\mathbb{T}^2/H$ with $\mathbb{T}^2$ by the map $\phi \left(\begin{smallmatrix}
s\\t
    \end{smallmatrix}\right)=\left(\begin{smallmatrix}
2s\\t
    \end{smallmatrix}\right)$
and view $\mathsf{P}'$ as a correspondence of $\mathbb{T}^2$. Then $\mathsf{P}'$ is isomorphic to the polymorphism of $\mathsf{P}(A,B)$ of $\mathbb{T}^2$ with
    $$
A=\left(
  \begin{matrix}
    1 & 0 \\
    0 & 2 \\
  \end{matrix}
\right),
\qquad
B=\left(
  \begin{matrix}
    1 & 2 \\
    1 & 0 \\
  \end{matrix}
\right).
    $$
Since $A,B\notin \textup{GL}(2,\mathbb{Z})$, $\mathsf{P}'$ is not the graph of an automorphism.
    \end{exas}

Example \ref{e:factors} (2) shows that a toral automorphism $\mathsf{P}$ may have a proper polymorphism as a factor (\emph{proper} means that the groups $K_\mathsf{P}^{(1)},K_\mathsf{P}^{(2)}$ in \eqref{eq:eta} are not both trivial). However, the following theorem shows that factors of automorphisms always have a nontrivial doubly invariant subgroup.

    \begin{theo}
    \label{t:deterministic}
Let $\mathsf{P}(A)\in\mathcal{P}_f(\mathbb{T}^m)$ be the graph of a toral automorphism $A\in\textup{GL}(m,\mathbb{Z})$. For every finite subgroup $H\subset \mathbb{T}^m$ there exists a finite doubly invariant subgroup $H'\subset \mathbb{T}^m$ containing $H$. In particular, $\mathsf{P}_{\negthinspace H'}$ is the graph of an automorphism of $\mathbb{T}^m/H'$.

In other words, if a polymorphism is a factor of a toral automorphism then it has a further factor which is again an automorphism.
    \end{theo}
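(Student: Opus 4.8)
The plan is to work on the dual side, where $\mathsf{P}(A)$ corresponds to the automorphism $A^\top$ of $\widehat{\mathbb{T}^m}=\mathbb{Z}^m$, and where a finite subgroup $H\subset\mathbb{T}^m$ corresponds to its annihilator $H^\perp$, a finite-index subgroup of $\mathbb{Z}^m$. Since $H$ is finite there is an integer $N\ge1$ with $N\mathbb{Z}^m\subset H^\perp$, i.e. $H$ is contained in the $N$-torsion subgroup $\mathbb{T}^m[N]=\frac1N\mathbb{Z}^m/\mathbb{Z}^m$. The key observation is that $A\in\textup{GL}(m,\mathbb{Z})$ maps $\frac1N\mathbb{Z}^m$ onto itself, so the $N$-torsion subgroup $H'\defeq\mathbb{T}^m[N]$ is a \emph{finite} subgroup which is genuinely invariant under $A$ and under $A^{-1}$. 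First I would record this: $H'$ contains $H$, $H'$ is finite (of order $N^m$), and $A H'=H'=A^{-1}H'$.

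Next I would check that $H'$ being $A$-invariant and $A^{-1}$-invariant is exactly the condition for $H'$ to be doubly invariant under $\Pi_{\mathsf{P}(A)}$ in the sense of Definition~\ref{d:poly}(1). Since $\mathsf{P}(A)=\{(x,Ax):x\in\mathbb{T}^m\}$ is the graph of $A$, we have $K_{\mathsf{P}(A)}^{(1)}=K_{\mathsf{P}(A)}^{(2)}=\{0\}$ and $\eta_{\mathsf{P}(A)}=A$ in the parametrization \eqref{eq:eta}. Theorem~\ref{t:factors}(3) then says that a closed subgroup is bi-invariant precisely when $\{0\}\subset H'$ (automatic) and $A(H')=H'$, which we have just verified. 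Hence $\mathsf{P}(A)_{H'}=\mathsf{P}(A)/(H'\times H')$ is the graph of the induced automorphism $\bar A$ of $\mathbb{T}^m/H'$; since $\mathbb{T}^m/H'\cong\mathbb{T}^m$ (divide out the $N$-torsion, which is the same as rescaling by $N$), $\mathsf{P}(A)_{H'}$ is again the graph of a toral automorphism. This proves the first two sentences of the statement.

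For the final rephrasing I would argue as follows. Suppose a polymorphism $\mathsf{Q}$ of a torus is a factor of $\mathsf{P}(A)$: by Definition~\ref{d:algebraic} there is a surjective homomorphism $\phi\colon\mathbb{T}^m\to\mathbb{T}^{m'}$ with $(\phi\times\phi)(\mathsf{P}(A))=\mathsf{Q}$. Then $\mathsf{Q}=\mathsf{P}(A)_{H}$ where $H=\ker\phi$, and since $\mathsf{Q}\in\mathcal{P}_f(\mathbb{T}^{m'})$ one checks that $H$ must be finite (the kernel of a surjection of tori with connected-plus-finite fibres is finite exactly when the correspondence is finite-to-one; more simply, $H\supset(\ker\phi)^\circ$ forces $m=m'$ and $H$ finite). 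Applying the first part to this $H$ yields a finite doubly invariant $H'\supset H$, so $\mathsf{P}(A)_{H'}$ is an automorphism of $\mathbb{T}^m/H'$, and it is a factor of $\mathsf{Q}=\mathsf{P}(A)_H$ via the further quotient $\mathbb{T}^m/H\to\mathbb{T}^m/H'$.

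The main obstacle — really the only non-formal point — is making the step "$H$ finite $\Rightarrow$ $\mathbb{T}^m[N]$ is finite and $A$-invariant for $N$ the exponent of $H$" completely clean, and confirming that invariance of $\mathbb{T}^m[N]$ under an integer matrix $A$ does not require $A\in\textup{GL}(m,\mathbb{Z})$ for the inclusion $A\,\mathbb{T}^m[N]\subseteq\mathbb{T}^m[N]$, but \emph{does} use $A^{-1}\in\textup{GL}(m,\mathbb{Z})$ (equivalently $\det A=\pm1$) for the reverse inclusion; this is precisely where the automorphism hypothesis enters, and is exactly what fails for the factor in Example~\ref{e:factors}(2). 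Everything else is bookkeeping with annihilators and an appeal to Theorem~\ref{t:factors}(3).
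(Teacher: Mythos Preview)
Your proof is correct and follows essentially the same route as the paper: both take $H'$ to be the full $N$-torsion subgroup $\{t\in\mathbb{T}^m:Nt=0\}$ for a suitable $N$ (the paper writes $q$), and then verify double invariance. Your verification via $AH'=H'=A^{-1}H'$ and Theorem~\ref{t:factors}(3) is in fact cleaner than the paper's, which instead checks $\bigcup_{n\ge1}K_{\mathsf{P}^n}^{(i)}\subset H'$ and invokes the invariance criterion from Corollary~\ref{c:factors}.
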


    \begin{proof}
Since $H$ is finite, there exists a $q\ge1$ such that $H\subset H'=\{\mathbf{t}\in\mathbb{T}^m: q\mathbf{t}=\mathbf{0}\}$. As one checks easily, $\bigcup_{n\ge1}K_{\mathsf{P}^n}^{(i)}\subset H'$ for $i=1,2$. Theorem \ref{t:factors2} shows that $H'$ is invariant under $\mathsf{P}_{\negthinspace H}$, which proves our claim.
    \end{proof}

Theorem \ref{t:deterministic} allows us to say a little more about the structure of factors of toral automorphisms.

    \begin{coro}
    \label{c:structure}
Let $\mathsf{P}(A)\in\mathcal{P}_f(\mathbb{T}^m)$ be the graph of a toral automorphism $A\in \textup{GL}(m,\mathbb{Z})$, $H\subset \mathbb{T}^m$ a finite subgroup and $H'\subset \mathbb{T}^m$ a finite doubly invariant subgroup containing $H$.

If we identify both $\mathbb{T}^m/H$ and $\mathbb{T}^m/H'$ with $\mathbb{T}^m$, then the correspondence $\mathsf{P}''=\mathsf{P}_{\negthinspace H'}$ is the graph of a toral automorphism $A''$ \textup{(}i.e., $\mathsf{P}''=\mathsf{P}(A'')$\textup{)}, and the correspondence $\mathsf{P}'=\mathsf{P}_{\negthinspace H}$ has the graph of the automorphism $A''\in\textup{GL}(m,\mathbb{Z})$ as a factor with kernel $(H/H')\times (H/H')$.
    \end{coro}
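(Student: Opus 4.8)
The plan is to treat this as a formal consequence of Theorem~\ref{t:deterministic}, the elementary fact that a finite quotient of $\mathbb{T}^m$ is again an $m$-torus, the identification of $\textup{Aut}(\mathbb{T}^m)$ with $\textup{GL}(m,\mathbb{Z})$, and the transitivity of the passage to factor correspondences.

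First I would record that for any finite subgroup $F\subset\mathbb{T}^m$ the dual group $\widehat{\mathbb{T}^m/F}$ is the annihilator $F^\perp\subset\mathbb{Z}^m$, which has finite index in $\mathbb{Z}^m$ and is therefore free of rank $m$; choosing a $\mathbb{Z}$-basis of $F^\perp$ yields a topological group isomorphism $\mathbb{T}^m/F\cong\mathbb{T}^m$. Applying this with $F=H$ and $F=H'$ gives the two identifications in the statement. By hypothesis $H'$ is doubly invariant under $\Pi_{\mathsf{P}(A)}$, so by Definition~\ref{d:poly}(1) (equivalently, by the last sentence of Theorem~\ref{t:deterministic}) the correspondence $\mathsf{P}''=\mathsf{P}_{H'}=\mathsf{P}(A)/(H'\times H')$ is the graph of a continuous automorphism of $\mathbb{T}^m/H'$. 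Transporting this automorphism along the chosen isomorphism $\mathbb{T}^m/H'\cong\mathbb{T}^m$ and using that every continuous automorphism of $\mathbb{T}^m$ is multiplication by a matrix in $\textup{GL}(m,\mathbb{Z})$, I obtain $\mathsf{P}''=\mathsf{P}(A'')$ for some $A''\in\textup{GL}(m,\mathbb{Z})$, which is the first assertion.

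For the second assertion I would use that $H\subset H'$, so that $H'/H$ is a finite subgroup of $\mathbb{T}^m/H$ and the quotient map $\mathbb{T}^m\to\mathbb{T}^m/H'$ factors through $\mathbb{T}^m\to\mathbb{T}^m/H$ via a surjective homomorphism $\phi\colon\mathbb{T}^m/H\to\mathbb{T}^m/H'$ with kernel $H'/H$. From the description $\mathsf{P}_F=\{(g_1F,g_2F):(g_1,g_2)\in\mathsf{P}(A)\}$ and the third isomorphism theorem one gets $(\phi\times\phi)(\mathsf{P}_H)=\mathsf{P}_{H'}=\mathsf{P}(A'')$; thus, by Definition~\ref{d:algebraic}, $\phi$ exhibits $\mathsf{P}(A'')$ as a factor of $\mathsf{P}'=\mathsf{P}_H$, and the kernel of $\phi\times\phi$ is $(H'/H)\times(H'/H)$. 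Transporting this along the identifications $\mathbb{T}^m/H\cong\mathbb{T}^m$ and $\mathbb{T}^m/H'\cong\mathbb{T}^m$ gives the claim, the factor kernel being $(H'/H)\times(H'/H)$ (this is the correct reading of the displayed subgroup, since $H\subset H'$).

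There is no deep obstacle: once Theorem~\ref{t:deterministic} is available the corollary is essentially bookkeeping. The point deserving some care is to check that the image of $\mathsf{P}_H$ under $\phi\times\phi$ really equals $\mathsf{P}_{H'}$ --- i.e.\ that first factoring by $H\times H$ and then by $(H'/H)\times(H'/H)$ is the same as factoring by $H'\times H'$ --- and that, whatever isomorphisms $\mathbb{T}^m/H\cong\mathbb{T}^m$ and $\mathbb{T}^m/H'\cong\mathbb{T}^m$ are fixed, the induced map $\phi$ becomes a surjective endomorphism of $\mathbb{T}^m$ with finite kernel isomorphic to $H'/H$, so that the factor is proper exactly when $H\subsetneq H'$.
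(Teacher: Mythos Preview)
Your proposal is correct and follows essentially the same route as the paper: both arguments boil down to the tower of quotient maps $\mathbb{T}^m\to\mathbb{T}^m/H\to\mathbb{T}^m/H'$ being equivariant for $A$, $\Pi_{\mathsf{P}'}$, and $A''$ respectively. The paper's proof is a one-line sketch of this tower, whereas you spell out the supporting facts (finite quotients of $\mathbb{T}^m$ are tori, $\textup{Aut}(\mathbb{T}^m)=\textup{GL}(m,\mathbb{Z})$, and $(\phi\times\phi)(\mathsf{P}_H)=\mathsf{P}_{H'}$); you also correctly flag that the factor kernel should read $(H'/H)\times(H'/H)$.
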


    \begin{proof}
The identifications of $\mathbb{T}^m/H$ and $\mathbb{T}^m/H'$ with $\mathbb{T}^m$ yield finite-to-one equivariant homomorphisms
    $$
\mathbb{T}^m\longrightarrow \mathbb{T}^m/H\longrightarrow \mathbb{T}^m/H'
    $$
where the automorphisms $A$ and $A''$ act on the first and third torus and the polymorphism $\Pi _{\mathsf{P}'}$ on the second.
    \end{proof}

    \begin{rems}
    \label{r:structure}
(1) The automorphism $A''\in\textup{GL}(m,\mathbb{Z})$ in Corollary \ref{c:structure} is obviously conjugate to $A$ in $\textup{GL}(m,\mathbb{Q})$, but not necessarily in $\textup{GL}(m,\mathbb{Z})$.

Conversely, if $\mathsf{P}=\mathsf{P}(A)$ is the graph if some $A\in\textup{GL}(m,\mathbb{Z})$, and if $A''\in\textup{GL}(m,\mathbb{Z})$ is conjugate to $A$ in $\textup{GL}(m,\mathbb{Q})$, then the graph $\mathsf{P}(A'')$ is isomorphic to $\mathsf{P}(A)_{\negthinspace H'}$ for some finite subgroup $H'\subset \mathbb{T}^m$.

\medskip (2) There is a minimal choice of the subgroup $H'\subset \mathbb{T}^m$ in Theorem \ref{t:deterministic}: the subgroup generated by $\bigcup_{n\in\mathbb{Z}}A^k H$ (which we know to be finite from the proof of Theorem \ref{t:deterministic}).

\medskip (3) Corollary \ref{c:structure} shows that a polymorphism $\Pi _{\mathsf{P}'}$ is a factor of an automorphism $A\in \textup{GL}(m,\mathbb{Z})$ of $\mathbb{T}^m$ if and only if there exist an $A''\in\textup{GL}(m,\mathbb{Z})$ which is conjugate to $A$ in $\textup{GL}(m,\mathbb{Q})$ and finite groups $H\subset H'\subset \mathbb{T}^m$ such that $\mathsf{P}'$ is a skew product over the (the graph of) automorphism $A''$ with fibre $(H/H')$. Note, however, that $\mathsf{P}'$ is connected and is therefore a nontrivial $H/H'$-bundle over the base $\mathbb{T}^m$ on which $A''$ acts.

\medskip (4) In \cite{Ve} it is shown that every polymorphism is a factor of an automorphism with respect to some invariant partition (i.e., invariant sub-sigma-algebra), but Theorem \ref{t:deterministic} shows that this is not true in the algebraic category.
    \end{rems}

\end{document}